\theoremstyle{plain}
\newtheorem{theorem}{Theorem}[section]
\newtheorem{question}[theorem]{Question}
\newtheorem{lemma}[theorem]{Lemma}
\newtheorem{lem}[theorem]{Lemma}
\newtheorem{corollary}[theorem]{Corollary}
\newtheorem{proposition}[theorem]{Proposition}
\newtheorem{definition}[theorem]{Definition}
\theoremstyle{remark}
\newtheorem{remark}[theorem]{Remark}
\newtheorem*{theorem*}{Theorem}
\def\Z{\mathbb Z}
\def\Q{\mathbb Q}
\def\C{\mathbb{C}}
\def\R{\mathbb{R}}
\def\N{\mathbb{N}}
\newcommand{\iip}[1]{\mathopen{\langle\!\langle}#1\mathclose{\rangle\!\rangle}}
\def\eps{\varepsilon}
\title{Can you compute the operator norm?}
\author{Tobias Fritz}
\address{Tobias Fritz, Perimeter Institute for Theoretical Physics\\ 
31 Caroline St North\\ 
Waterloo, Ontario\\ 
Canada N2L 2Y5}
\email{tobias.fritz@icfo.es}
\author{Tim Netzer}
\address{Tim Netzer, Univ.\ Leipzig,
PF 100920, 04009 Leipzig , Germany}
\email{tim.netzer@math.uni-leipzig.de}
\author{Andreas Thom}
\address{Andreas Thom, Univ.\ Leipzig,
PF 100920, 04009 Leipzig , Germany}
\email{andreas.thom@math.uni-leipzig.de}
\begin{document}

\onehalfspace

\begin{abstract} 
In this note we address various algorithmic problems that arise in the computation of the operator norm in unitary representations of a group on Hilbert space. We show that the operator norm in the universal unitary representation is computable if the group is residually finite-dimensional or amenable with decidable word problem. Moreover, we relate the computability of the operator norm on the group $F_2 \times F_2$ to Kirchberg's QWEP Conjecture, a fundamental open problem in the theory of operator algebras.
\end{abstract}

\maketitle

\tableofcontents

\section{Introduction}

In this article we want to study various algorithmic problems related to the computation of the operator norm on group rings. Let us take some time and state the setup more precisely.
Let $A$ be a finite alphabet, $ F_A$ the free group on the set $A$ and $R \subset  F_A$ be a finite set. We denote by $\Gamma := \langle A | R \rangle$ the group which is generated by the set $A$ subject to relations $R$. The group $\Gamma$ is equipped with a natural surjection $ F_A \to \Gamma$, which we denote by $g \mapsto \bar g$. The kernel of this surjection is $\iip{R}$, the normal subgroup generated by the set $R$. A triple $(\Gamma,A,R)$ as above is called a \emph{presented group}; it is called finitely presented if $R$ is finite. We denote the integral group ring of a group $\Gamma$ by 
$\Z \Gamma$, and the complex group ring by $\C \Gamma$.
For $a \in \Z  F_A$, we denote by $\bar a$ its canonical image in $\Z \Gamma$.

We want to study the operator norm of $\bar a$, considered as an element in the universal group $C^*$-algebra $C^*_{u} \Gamma$ and the reduced group $C^*$-algebra $C^*_{\lambda} \Gamma$. We denote the universal $C^*$-norm on $\C \Gamma$ by $\|.\|_u$ and the operator norm associated with the left-regular representation by $\|.\|_{\lambda}$; more generally, we write $\|.\|_{\varphi}$ for the semi-norm associated to any unitary representation $\varphi \colon \Gamma \to U(H_{\varphi})$:
$$
\|\varphi(a)\|_{\varphi} := \sup \left\{ \|\varphi(a)\xi \| \mid \xi \in H_{\varphi}, \|\xi\| \leq 1 \right\}.
$$ 
For more information about these notions, consult the appendices in \cite{bekval}.
It is well-known that $\|.\|_{\lambda} \leq \|.\|_u$ with equality if and only if $\Gamma$ is amenable; this is Kesten's theorem \cite{kesten}. The natural trace on $\C \Gamma$ is denoted by $\tau \colon \C \Gamma \to \C$, it is given by the formula
$\tau\left(\sum_{g \in \Gamma} a_g g \right) = a_e.$ We denote the cone of hermitian squares in $\C \Gamma$ by $\Sigma^2 \C \Gamma$, that is:
$$\Sigma^2 \C \Gamma := \left\{ \sum_{i=1}^n a_i^*a_i \mid n \in \N, a_i \in \C \Gamma \right\}.$$

We say that a real number $\alpha \in \R$ is \emph {computable} if it can be approximated to any precision with rational numbers by a Turing machine or equivalently, if there is an algorithm which produces two sequences of rational numbers $(p_n)_{n \in \N}$ and $(q_n)_{n \in \N}$, such that $(p_n)_{n \in \N}$ is monotone increasing, $(q_n)_{n \in \N}$ is monotone decreasing and $
\sup_n p_n = \alpha = \inf_n q_n.$ 
Most numbers that we usually think of are computable by their very definition. Even though there are only countably many computable numbers since there are only countably many possible algorithms, one has to think hard to give an explicit example of an uncomputable number. One is Chaitin's constant, whose binary expansion $\sum_{n \in \N} \varepsilon_n 2^{-n}$ encodes the halting problem. Here, $\varepsilon_n \in \{0,1\}$ depending on whether the $n$-th machine in some explicit list of all machines halts. Chaitin's constant is definable in the language of set theory; but not computable. It is important for us that numbers defined in a much simpler language, the first-order language of real closed fields, \emph{are} computable; this is Tarski's famous theorem about quantifier elimination, which we will apply several times. The distinction between computable and definable number goes back to Turing \cite{turing}.

More generally, we want to speak about computable functions depending on an element in the integral group ring of a group.

\begin{definition} \label{comp} Let $(\Gamma,A,R)$ be a presented group.
We say that a function $f \colon \Z \Gamma \to \R$ is computable if there exists an algorithm that takes as input an element $a \in \Z  F_A$ and produces two sequences of rational numbers $(p_n)_{n \in \N}$ and $(q_n)_{n \in \N}$ such that 
\begin{enumerate}
\item $(p_n)_{n \in \N}$ is monotone increasing,
\item $(q_n)_{n \in \N}$ is monotone decreasing, and
\item $\sup_n p_n = f(\bar a) = \inf_n q_n.$
\end{enumerate}
\end{definition}

Obviously, the values of a computable function $f \colon \Z \Gamma \to \R$ are all computable real numbers; the converse does not hold.
Many decision problems in group theory have been studied. The most famous being the \emph{word problem}~\cite{MR0179237,MR0260851,MR0052436}. Here, given a finitely presented group $(\Gamma,A,R)$, the task is to find an algorithm which decides whether an input $g \in  F_A$ satisfies $\bar g = e_{\Gamma}$ or not. 

\begin{remark} \label{comprem}
Note that the computability of the functions $a \mapsto \|a\|_{u}$ or $a \mapsto \|a\|_{\lambda}$ immediately gives a solution to the word problem. Indeed, for $g \in F_A$, we have either $\|\bar g - e_{\Gamma}\| =0$ or $\| \bar g - e_{\Gamma}\| \geq 1$, depending on whether $g$ is trivial in $\Gamma$ or not. Hence, a computation of the operator norm in the sense above gives a decision procedure.
\end{remark}

Our first result is the following converse of the previous remark for the class of amenable groups.

\begin{theorem} \label{mainamenable}
Let $(\Gamma,A,R)$ be a finitely presented amenable group. Then, the word problem for $(\Gamma,A,R)$ is decidable if and only if the function
$$\Z \Gamma \ni a \mapsto \|a\|_{\lambda} \in \R$$
is computable.
\end{theorem}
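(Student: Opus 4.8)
The nontrivial direction is that a decidable word problem makes $a \mapsto \|a\|_{\lambda}$ computable; the reverse implication is precisely Remark~\ref{comprem}. So assume the word problem for $(\Gamma,A,R)$ is decidable. Since $\Gamma$ is amenable, Kesten's theorem gives $\|.\|_{\lambda}=\|.\|_u$, and it suffices to manufacture, for each input $a \in \Z F_A$, a monotone increasing sequence of rationals approaching $\|a\|_{\lambda}=\|a\|_u$ from below and a monotone decreasing one approaching it from above.

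For the lower bounds I would exploit the canonical trace. Put $b:=\bar a^{*}\bar a\in\Z\Gamma$; since $\tau$ is faithful on the group von Neumann algebra $L(\Gamma)$, the $L^{k}(\tau)$-norms of $b$ increase to its $L^{\infty}$-norm, i.e.\ $\tau(b^{k})^{1/k}\uparrow\|b\|_{\lambda}=\|a\|_{\lambda}^{2}$, the monotonicity being the power-mean inequality for the probability spectral measure of $b\ge 0$. Each $\tau(b^{k})$ is just the coefficient of $e_{\Gamma}$ in the group-ring element $b^{k}$, and to extract it one only has to expand the product in $\C\Gamma$ and merge the terms representing equal group elements — exactly the bookkeeping a word-problem oracle performs. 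Rational lower approximations of $\tau(b^{k})^{1/2k}$, made monotone by running maxima, give $(p_{n})_{n\in\N}$. Note that this half needs no amenability.

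The upper bounds are the heart of the matter, and here I would dualize. As $\|a\|_u^{2}=\|a^{*}a\|_u=\sup_{\varphi}\varphi(a^{*}a)$ over states $\varphi$ of $C^{*}_{u}\Gamma$, and states biject with normalized positive-definite functions $p\colon\Gamma\to\C$, one has
$$\|a\|_u^{2}=\sup\Big\{\textstyle\sum_{g,h}\overline{a_g}\,a_h\,p(g^{-1}h)\ \Big|\ p(e)=1,\ \big(p(g_i^{-1}g_j)\big)_{i,j}\succeq 0\ \text{for all finite }g_1,\dots,g_n\Big\}.$$
Keeping only the positive-semidefiniteness constraints indexed by group elements of word length $\le N$ relaxes this supremum, and hence yields upper bounds $\theta_N\ge\|a\|_u^{2}$ that decrease in $N$. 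Building the truncated program needs only the finite multiplication pattern of $\Gamma$ on the ball of radius $\le 2N$ (together with the relation $p(g^{-1})=\overline{p(g)}$), all of which the decidable word problem supplies; for fixed $N$ it is a finite semidefinite program with integer data, so $\theta_N$ is definable in the first-order language of real closed fields and therefore computable by Tarski's theorem. By strong duality the same number is the optimal bound certified by degree-$N$ elements of $\Sigma^{2}\C\Gamma$, which ties the estimate to the cone of hermitian squares. Rational upper approximations of $\theta_N^{1/2}$, made monotone by running minima, give $(q_n)_{n\in\N}$.

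The step I expect to be the main obstacle is the convergence $\theta_N\downarrow\|a\|_u^{2}$. The key is compactness: a normalized positive-definite function satisfies $|p(g)|\le p(e)=1$, so from near-optimal truncated solutions a diagonal argument extracts a pointwise-convergent subsequence whose limit is a genuine normalized positive-definite function on all of $\Gamma$; this limit is feasible for the full supremum and attains value $\ge\lim_N\theta_N$, giving $\lim_N\theta_N\le\|a\|_u^{2}$, while the reverse inequality is just the relaxation. Converting the abstract first-order definability of each $\theta_N$ into an honest algorithm driven by the word-problem oracle, and checking that the two monotone sequences interleave with the required limits, is the remaining bookkeeping. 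Throughout, amenability enters only through Kesten's identity $\|.\|_{\lambda}=\|.\|_u$, which is what lets this universal-side moment hierarchy control the reduced norm appearing in the statement.
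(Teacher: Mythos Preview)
Your argument is correct, and the lower-bound half is exactly the paper's: Lemma~\ref{amenable} gives $\|a\|_\lambda=\sup_n\tau((a^*a)^n)^{1/2n}$, and the word-problem oracle is what lets you evaluate $\tau$ on powers in $\Z\Gamma$.

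For the upper bounds you take a close but not identical route. The paper simply invokes Corollary~\ref{corup}, which builds the semidefinite hierarchy in the \emph{free} group ring $\C F_A$ via the quadratic module $Q(A,R)$ generated by the relations; convergence there is the Positivstellensatz of Lemma~\ref{univ}, and no word-problem oracle is required for that half at all. You instead set up the hierarchy directly in $\C\Gamma$ on the dual side, as relaxations over truncated positive-definite functions, and prove convergence by a compactness/diagonal extraction. The two hierarchies are the same SDP (as you observe via strong duality), so nothing is lost; the practical difference is that the paper's packaging isolates the upper-bound machinery as a statement valid for \emph{every} finitely presented group, with decidability of the word problem and amenability entering only through the lower bounds and Kesten's theorem, whereas your version spends the word-problem oracle on both halves.
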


In general, amenable groups need not have a decidable word problem, as was shown by Kharlampovich \cite{MR631441}. 

Generally speaking: while it is easy, for given $g \in  F_A$, to find a certificate that that $\bar g = e_{\Gamma}$, if that is the case, it is hard (and sometimes impossible) to get a certificate that $\bar g \neq e_{\Gamma}$, if that is the case. In order to provide the second certificate one needs additional information about $\Gamma$.
Let us recall a fundamental result of Mostowskii and McKinsey, and explain its proof since it serves as a motivation for our work.

\begin{theorem}[Mostowskii \cite{most}, McKinsey \cite{mckin}]
\label{RFWP}
Every finitely presented residually finite group $(\Gamma,A,R)$ has a decidable word problem.
\end{theorem}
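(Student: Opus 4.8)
The plan is to show that the word problem is decidable by exhibiting both the set $T := \{ g \in F_A \mid \bar g = e_{\Gamma} \}$ and its complement as recursively enumerable subsets of $F_A$. A classical fact (essentially Post's theorem) then guarantees that a subset of a set with a computable enumeration is decidable as soon as it and its complement are both recursively enumerable. Concretely, I would run two semi-decision procedures in parallel; on any input $g \in F_A$ exactly one of them is guaranteed to halt, and which one halts tells us whether $\bar g = e_{\Gamma}$.

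First I would show that $T$ itself is recursively enumerable, using only finite presentation. By definition $\bar g = e_{\Gamma}$ precisely when $g \in \iip{R}$, that is, when $g$ is freely equal to some finite product $\prod_{i=1}^{k} w_i r_i^{\epsilon_i} w_i^{-1}$ with $w_i \in F_A$, $r_i \in R$, and $\epsilon_i \in \{\pm 1\}$. Since $R$ is finite, a Turing machine can systematically enumerate all such products, freely reduce each one, and compare it to the freely reduced form of $g$; equality of reduced words in $F_A$ is of course decidable. Whenever $\bar g = e_{\Gamma}$ this search eventually succeeds, so $T$ is recursively enumerable. Note that this step makes no use of residual finiteness.

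The key step is to show that the complement of $T$ is recursively enumerable, and this is where residual finiteness enters. I would enumerate all finite groups $Q$ --- concretely, all multiplication tables on the sets $\{1,\dots,n\}$ satisfying the group axioms, which is a decidable condition --- together with all set maps $\psi \colon A \to Q$. For each such pair one checks whether $\psi$ extends to a group homomorphism $\Gamma \to Q$; because the presentation is finite, this amounts to the finite verification that $\psi(r) = e_Q$ for every $r \in R$, reading $r$ as a word in the generators and their inverses. If $\psi$ does define a homomorphism and moreover $\psi(g) \neq e_Q$, the machine halts and reports $\bar g \neq e_{\Gamma}$. This procedure is sound, since a finite quotient separating $\bar g$ from $e_{\Gamma}$ certainly witnesses $\bar g \neq e_{\Gamma}$. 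Completeness is exactly the content of residual finiteness: if $\bar g \neq e_{\Gamma}$, then by definition some finite group $Q$ and homomorphism $\Gamma \to Q$ send $\bar g$ to a nontrivial element, and this quotient is eventually found.

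The argument is largely a matter of organizing these two enumerations carefully, and the only real subtlety --- the part worth isolating --- is the asymmetry between the two certificates. The certificate for triviality, a product of conjugates of relators, is available for free from the presentation, whereas the certificate for nontriviality, a separating finite quotient, exists \emph{only} because of the residual finiteness hypothesis. Dropping that hypothesis leaves the complement of $T$ without any obvious enumeration, which is precisely the mechanism by which general finitely presented groups can fail to have a decidable word problem.
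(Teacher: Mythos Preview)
Your proof is correct and follows essentially the same approach as the paper: run in parallel a search through $\iip{R}$ for a certificate of triviality and a search through finite quotients for a certificate of nontriviality, with residual finiteness guaranteeing that one of the two must terminate. The only cosmetic difference is that the paper enumerates $A$-tuples of permutations (implicitly using Cayley's theorem) rather than multiplication tables of abstract finite groups, but the argument is the same.
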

\begin{proof}
The algorithm does a parallel search for $w \in \iip{R}$ and for finite quotients $\varphi \colon \Gamma \to H$ with $\varphi(w)\neq 0$. The first search is done by enumerating all elements in $\iip{R}$ and comparing them with $w$. The second search is done by enumerating all $A$-tuples of permutations, checking whether they satisfy all relations in $R$, and computing $w$ on the $A$-tuple of permutations. Since $\Gamma$ is residually finite, at some point one of the searches must terminate.
\end{proof}

Therefore, showing undecidability of the word problem is one strategy (maybe not the most promising) for proving that a group is not residually finite. The converse is not true: the Baumslag-Solitar group ${\rm BS}(2,3)$ is not residually finite~\cite{BS}, but has, as a $1$-relator group, decidable word problem~\cite{Magnus}.

In this note we study the property of a group being residually finite-dimensional (RFD), see Definition \ref{rfd}, a certain strenghtening of residual finiteness, and show that it implies computability of the norm in the universal group $C^*$-algebra.
The largest known class of RFD groups contains free groups, Fuchsian groups and many Kleinian groups, see Lubotzky-Shalom \cite{lubsh} and the remarks after Definition \ref{rfd}. For more information on property RFD and related notions consult Brown-Ozawa \cite{brownozawa}. Our main result is the following theorem. 

\begin{theorem}
\label{mainthm}
Let $(\Gamma,A,R)$ be a finitely presented RFD group. Then, $\Gamma$ has a decidable word problem and the function
$$\Z \Gamma \ni a \mapsto \|a\|_u \in \R$$
is computable.
\end{theorem}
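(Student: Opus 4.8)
The plan is to sandwich $\|\bar a\|_u$ between an increasing sequence of lower bounds coming from finite-dimensional representations and a decreasing sequence of upper bounds coming from sums of hermitian squares, producing each individual bound by Tarski's theorem. I use the defining feature of property RFD (Definition \ref{rfd}): the finite-dimensional unitary representations are norming, so $\|\bar a\|_u=\sup_d N_d(a)$, where $N_d(a):=\sup\{\|\varphi(a)\| : \varphi\colon\Gamma\to U(d)\}$. First I would dispose of the word problem by an effective, representation-theoretic version of Mostowskii--McKinsey (Theorem \ref{RFWP}). Given $g\in F_A$, run two searches in parallel: one enumerates products of conjugates of $R^{\pm1}$ hunting for a witness of $g\in\iip{R}$, certifying $\bar g=e$; the other, for each $d$, forms the compact real semialgebraic representation variety $V_d:=\{(U_s)_{s\in A}\in U(d)^A : w(U)=\id\ \text{for all}\ w\in R\}$ and computes $m_d(g):=\max_{V_d}\|\varphi(g)-\id\|$, which, being the maximum of a semialgebraic function over a compact semialgebraic set, is a definable and hence computable real number. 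One produces rational lower approximations to $m_d(g)$ and certifies $\bar g\neq e$ as soon as some $m_d(g)>0$; property RFD guarantees that this happens for some $d$ when $\bar g\neq e$. This decides the word problem, and en route it yields effective arithmetic in $\C\Gamma$: the decision procedure lets one sort any finite list of words of $F_A$ into $\Gamma$-classes.

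The lower bounds are now immediate: each $N_d(a)=\max_{V_d}\|\varphi(a)\|$ is a definable real number, since the operator norm is semialgebraic in the matrix entries, and $N_d(a)\le N_{d+1}(a)$ because a $d$-dimensional representation enlarges to dimension $d+1$ by adjoining the trivial representation. Emitting rational lower approximations to $N_1(a),N_2(a),\dots$ and taking running maxima yields a monotone increasing rational sequence $(p_n)$ with $\sup_n p_n=\sup_d N_d(a)=\|\bar a\|_u$.

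For the decreasing sequence I would first establish the algebraic heart of the matter: for every rational $\lambda>\|\bar a\|_u^2$ there is an \emph{exact}, finitely supported certificate $\lambda\cdot1-\bar a^*\bar a\in\Sigma^2\C\Gamma$. The positive cone of $C^*_u\Gamma$ is the $\|\cdot\|_u$-closure of $\Sigma^2\C\Gamma$, and $\Sigma^2\C\Gamma$ is Archimedean: writing $x=\sum_i c_ig_i$ and $v_i:=(c_i/|c_i|)g_i$, the identity
\[ \Big(\sum_i|c_i|\Big)^2\!\cdot1-x^*x=\sum_{i<j}|c_i|\,|c_j|\,(v_i-v_j)^*(v_i-v_j) \]
exhibits $(\sum_i|c_i|)^2\cdot1-x^*x$ as a sum of hermitian squares. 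Since $\lambda\cdot1-\bar a^*\bar a$ satisfies $\varphi(\lambda\cdot1-\bar a^*\bar a)\ge(\lambda-\|\bar a\|_u^2)\cdot\id>0$ in every representation, the Archimedean Positivstellensatz for the Archimedean quadratic module $\Sigma^2\C\Gamma$ places it in $\Sigma^2\C\Gamma$, with finitely supported $b_i$. Conversely any such certificate forces $\lambda\ge\|\bar a\|_u^2$, so $\|\bar a\|_u^2=\inf\{\lambda : \lambda\cdot1-\bar a^*\bar a\in\Sigma^2\C\Gamma\}$. To compute this infimum from above, fix $k,D\in\N$ and let $\mu_{k,D}$ be the infimum of $\lambda$ over all $b_1,\dots,b_k\in\C F_A$ supported on words of length at most $D$ with $\lambda\cdot1-\bar a^*\bar a=\sum_{i=1}^k b_i^*b_i$ in $\C\Gamma$. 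Using the effective arithmetic above to decide which of the finitely many words involved coincide in $\Gamma$, this equation becomes a finite system of polynomial equations over $\Q$ in $\lambda$ and the real and imaginary parts of the coefficients of the $b_i$, so $\mu_{k,D}$ is definable and computable. As every exact certificate uses finitely many finitely supported squares, $\inf_{k,D}\mu_{k,D}=\|\bar a\|_u^2$; taking running minima of rational upper approximations to the $\mu_{k,D}$ and extracting square roots produces a monotone decreasing rational sequence $(q_n)$ with $\inf_n q_n=\|\bar a\|_u$.

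The main obstacle is exactly the exactness required in the previous step. A direct search for sum-of-squares certificates with rational coefficients is too rigid: imposing $\lambda\cdot1-\bar a^*\bar a=\sum_i b_i^*b_i$ as an identity in $\C\Gamma$ carves out a real variety on which rational points need not be dense, so one cannot expect to stumble upon certificates by enumeration. The device that rescues effectivity is to hand the continuous coefficients of the $b_i$ to Tarski's quantifier elimination rather than searching for them, which is legitimate precisely because the existence of \emph{real} finitely supported certificates for all $\lambda>\|\bar a\|_u^2$ is guaranteed by the Archimedean Positivstellensatz. Pinning down that Positivstellensatz in the form used here is where the genuine work lies.
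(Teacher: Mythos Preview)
Your argument is correct and follows the same overall architecture as the paper: lower bounds from finite-dimensional representation varieties via Tarski's quantifier elimination, upper bounds from sum-of-hermitian-squares certificates, with the RFD hypothesis making the lower bounds sup out to $\|\bar a\|_u$.

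There is one technical difference worth flagging. For the upper bounds you work directly in $\C\Gamma$: you search for $b_1,\dots,b_k\in\C F_A$ with $\lambda\cdot 1-\bar a^*\bar a=\sum_i \overline{b_i}^*\overline{b_i}$ in $\C\Gamma$, and to turn this into a first-order sentence you invoke the decidability of the word problem to sort words into $\Gamma$-classes. The paper instead lifts everything to $\C F_A$ and replaces $\Sigma^2\C\Gamma$ by the quadratic module $Q(A,R)$ generated by $\{1-r:r\in R\}$; the resulting equation $\Lambda-a^*a=\sum_{r,g,h}C_{r,g,h}\,g^{-1}(1-r)h$ lives in the free group ring, where word equality is trivial, so each truncated problem is already a semidefinite program without any appeal to the word problem. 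This buys the paper a cleaner statement (Corollary~\ref{corup}): the decreasing sequence of upper bounds exists for \emph{every} finitely presented group, decoupled from RFD or any decidability hypothesis. Your route is perfectly valid here since you have already secured the word problem, but it ties the two halves of the argument together where the paper keeps them independent. Your explicit Archimedean identity and your direct representation-theoretic proof of decidability (bypassing the reduction to residual finiteness and Mal'cev) are nice touches that the paper does not spell out.
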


The proof of Theorem \ref{mainamenable} and Theorem \ref{mainthm} follow closely the basic idea of the proof of Theorem \ref{RFWP} that we have outlined above. Whereas there is always a sequence of upper bounds, a sequence of lower bounds requires more information about the group and can be provided if the group is RFD or amenable with decidable word problem.

It is a famous open problem in the theory of operator algebras whether the group $ F_2 \times  F_2$ is RFD; nowadays called Kirchberg's Conjecture \cite{Kir}. In principle, a strategy to disprove Kirchberg's Conjecture is to show that the norm in the universal group $C^*$-algebra of $ F_2 \times  F_2$ is in fact not computable. This is not as unreasonable as it may sound, since there are many relatively easy computational problems related to $ F_2 \times  F_2$, which are known to be unsolvable, see the remarks in Section \ref{kirchberg}.

\vspace{0.2cm}

Let us return to the problem of actually computing the operator norm. Once one is in the situation that some number, like the operator norm of some specific element $a \in \Z \Gamma$, is computable, one has to face the following problem. Suppose another computable number $\alpha$ is given in form of a machine that computes it. Can we decide whether $\|a\| = \alpha$? This again is hard, and in general it is impossible to decide if two machines compute the same number. However, we can circumvent this problem in special cases:

\begin{theorem} \label{precthm}Let $ F_A$ be a free group on the finite set $A$. Then there is an algorithm which takes as input $a \in \Z F_A$ and computes a definition of $\|a\|$ in the first order language of real closed fields. In particular, if $\alpha$ is any other number defined in first order language of real closed fields, then it is decidable whether $\|a\| = \alpha$ holds or not.
\end{theorem}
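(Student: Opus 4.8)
The plan is to reduce the statement to Tarski's theorem: the first‑order theory of real closed fields admits quantifier elimination, so the supremum of a rational linear functional over a semialgebraic set is again a number definable in this language, and such a definition can be produced algorithmically. Since $\|a\|_u^2 = \|a^*a\|_u$ and $b := a^*a$ is self‑adjoint and supported on the finite set $T := S^{-1}S$, where $S := \mathrm{supp}(a) \subseteq F_A$, I would first rewrite the universal norm $\|a\| = \|a\|_u$ in its dual form
$$\|a\|_u^2 = \sup\Big\{ \sum_{w \in T} b_w\, \psi(w) \ : \ \psi \text{ of positive type on } F_A,\ \psi(e) = 1 \Big\},$$
using that a state on $C^*_u F_A$ is the same as a normalized function of positive type on $F_A$, and that the value above depends only on the restriction $\psi|_T$. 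Writing $K \subseteq \R^{T}$ for the compact convex set of all such restrictions $(\psi(w))_{w \in T}$, the quantity $\|a\|_u^2$ is the supremum of a rational linear functional over $K$. Thus the whole statement reduces to the assertion that $K$ is a semialgebraic subset of $\R^{T}$ for which a defining formula can be computed from $a$.

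By definition $K$ is the projection to the coordinates $T$ of the set of all functions of positive type, which is cut out by infinitely many linear matrix inequalities, namely the positivity of every finite moment matrix $[\psi(g^{-1}h)]_{g,h}$. For each radius $\ell$ this produces an outer semialgebraic relaxation $K^{(\ell)} \supseteq K$, obtained by imposing $\succeq 0$ only on the moment matrices indexed by words of length at most $\ell$; dually, on the representation side one has the increasing family of definable lower bounds $f_d$ coming from $d$‑dimensional unitary representations, whose supremum is $\|a\|_u$ because $F_A$ is RFD, as in Theorem~\ref{mainthm}. Each $K^{(\ell)}$ is the projection of a spectrahedron with rational data, hence semialgebraic with an effectively computable description, and $\bigcap_\ell K^{(\ell)} = K$. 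The heart of the argument is to show that this hierarchy \emph{terminates effectively}: there is a bound $\ell_0 = \ell_0(a)$, computable from $a$, with $K^{(\ell_0)} = K$. This is where the freeness of $F_A$ must be used: because $F_A$ has no relations, extendability of a partial positive‑type datum on $T$ to all of $F_A$ should be governed by finitely many, explicitly bounded, moment‑matrix conditions. Granting this, $K = K^{(\ell_0)}$ is semialgebraic with a computable defining formula, and applying quantifier elimination to $\sup\{\sum_w b_w x_w \mid x \in K^{(\ell_0)}\}$ yields, algorithmically, a formula $\varphi(t)$ in the language of real closed fields that holds precisely for $t = \|a\|$.

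I expect the main obstacle to be exactly this effective termination. It is delicate because in the free group algebra the word length can drop drastically under $q \mapsto q^*q$: for instance, with generators $s,t \in A$, the element $q = s + st$ satisfies $q^*q = 2 + t + t^{-1}$. Hence one cannot bound the length of a sum‑of‑squares certificate by the length of $b$ alone, and the "no degree jump" argument available in the free \emph{associative} algebra is unavailable in the group setting. The proof must instead exploit the free‑product structure $F_A = \ast_{a \in A} \Z$ to control the extension problem directly; this is precisely the feature that is expected to fail for $F_2 \times F_2$, consistent with the link to Kirchberg's Conjecture discussed in Section~\ref{kirchberg}.

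Finally, the decidability claim is immediate once a defining formula $\varphi(t)$ for $\|a\|$ has been produced. If $\alpha$ is given by another formula $\psi(t)$ of the same language, then
$$\forall t\, \big(\varphi(t) \leftrightarrow \psi(t)\big)$$
is a sentence over the real closed field $\R$, whose truth is decidable by Tarski's theorem; it holds if and only if $\|a\| = \alpha$. In particular, each such norm $\|a\|$ is a computable real algebraic number, whose minimal polynomial and isolating interval can be extracted from $\varphi$.
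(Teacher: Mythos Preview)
Your framework is sound up to the point you yourself flag: the argument hinges on the claim that the moment hierarchy $K^{(\ell)}$ terminates at some effectively computable $\ell_0(a)$, and you do not prove this. Everything after ``Granting this, \dots'' is fine, but that is precisely the content of the theorem; the discussion of why a naive degree bound fails is accurate, yet no replacement argument is supplied. As written, the proposal is a reduction of the theorem to an unproved (and not obviously easier) statement.

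The paper closes this gap by working on the representation side rather than the state side. The key input is Choi's trick (Lemma~\ref{choi}): given $a\in\Z F_A$ supported on words of length $\le d$, one takes any unitary representation $(\sigma,H_\sigma,\xi')$ realizing $\|a\|_u$, compresses to the span $H$ of $\{\sigma(g)\xi':|g|\le d\}$ (of dimension $\le 2|A|^{d}$), and then dilates each compressed generator $p\sigma(v_i)p$ to an honest unitary on $H\oplus H$ via the Halmos $2\times2$ unitary dilation. This produces a genuine unitary representation of $F_A$ of dimension $k=4|A|^{d}$ achieving $\|a\|_u$ exactly. Hence, in the notation of the proof of Theorem~\ref{mainthm}, one has $\|a\|_u^2=\alpha_k$ for this explicit, computable $k$; since $\alpha_k$ is the maximum of a polynomial over a real algebraic set, a first-order definition of $\|a\|_u$ is immediate, and Tarski finishes the decidability claim exactly as you outline. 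The freeness of $F_A$ enters only through the fact that \emph{any} tuple of unitaries defines a representation, so the dilated unitaries need satisfy no relations---this is the concrete substitute for the ``free-product extension'' you were looking for, and it sidesteps entirely the question of whether the SOS/moment hierarchy stabilizes.
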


Thanks to Tarski's theorem, the definition of some number $\alpha$ in the first order language of real closed fields takes the form
$$(p(\alpha)=0) \wedge (q_1(\alpha)>0) \wedge \cdots \wedge (q_n(\alpha)>0)$$
for some $n \in \N$ and polynomials $p,q_1,\dots,q_n \in \Z[t]$.

Theorem \ref{precthm} can be used to give algorithms which decide some useful properties. For example, it is decidable  whether $a \in \Z  F_A$ is invertible in $C^*_u( F_A)$ or not. It is not known at the moment whether this result extends to all RFD groups.
There are other variations on Theorem \ref{precthm}. For example we can show, using similar techniques, that there is an algorithm that takes a self-adjoint element $a \in \Z F_A$ as input and produces definitions of real numbers $\mu_1,\mu_2$ such that the spectrum of $a$ in $C^*_u(F_A)$ has the form $[\mu_1,\mu_2] \subset \R$.

\vspace{0.2cm}

The article is organized as follows. In Section \ref{secuniv} we study the norm in the universal representation and prove Theorem \ref{mainthm} and Theorem \ref{precthm}. Theorem \ref{mainamenable} is proved in Section \ref{thmamen}. In Section \ref{kirchberg}, we discuss a relation with Kirchberg's QWEP Conjecture and speculate about a relationship with some algorithmic problems related to the group $F_2 \times F_2$, which are known to be undecidable.

\section{Computability of the norm in the universal representation} \label{secuniv}

The following lemma provides the key to the computation of a sequence of upper bounds. It can be regarded as a special case of a strict Positivstellensatz due to Schm\"udgen~\cite{Schm}.

\begin{lemma} \label{univ} Let $a \in \Z \Gamma$.
\label{semdef}
$$
\|a\|_u = \inf \left\{ \lambda \in \R_{\geq 0} \mid \lambda^2 - a^*a \in \Sigma^2 \C \Gamma \right\}
$$
\end{lemma}

\begin{proof}
Clearly $\Sigma^2 \C \Gamma \subseteq \left(C_u^*\Gamma\right)_+:=\{x^*x\mid x\in C_u^*\Gamma\}$. Conversely, we claim that $x + \varepsilon1 \in \Sigma^2 \C \Gamma$ for every $x\in \left(C_u^*\Gamma\right)_+\cap \C\Gamma$ and $\varepsilon > 0$. For if this were not the case for some $x$, then the Riesz extension theorem~\cite{Akh} would guarantee the existence of a linear map $\varphi : \C \Gamma \to \C$ with $\varphi \left( \Sigma^2 \C \Gamma \right) \subseteq \R_+$ and $\varphi(x) < 0$. For this it is essential that $1$ is an algebraic interior point in $\Sigma^2\C\Gamma$, as shown in \cite{cimpric}. The GNS construction turns this $\varphi$ into a unitary representation $\pi_{\varphi}$ with the property that $\pi_{\varphi}(x) \not\geq 0$, so that $x \not \in \left(C_u^*\Gamma\right)_+$.
\end{proof}

If $(\Gamma,A,R)$ is a finitely presented group, then one can find a convergent sequence of upper bounds on $\|a\|_u$ as follows. Let $F_{A,n}$ be the set of elements in $F_A$ with word length less or equal $n$. Let us write $Q(A,R)$ for the quadratic module in $\C F_A$ generated by $\{1-r \mid r \in R\}$. More precisely, we set

$$Q(A,R) := \left\{  \sum_{r \in R \cup \{0\}}\sum_{k = 1}^{n_r} b_{r,k}^*(1-r)b_{r,k} \:\Bigg|\: n_i \in \N, b_{i,k} \in \C F_A, \forall i \in R \cup \{0\} \right\}.$$
Obviously, $Q(A,R)$ is a convex cone and functionals on $\C F_A$ which are positive on $Q(A,R)$ are in bijection with positive functionals on $\C \Gamma$. In particular, the proof of Lemma \ref{univ} yields 
\begin{equation} \label{univg}
\|\bar a\|_u = \inf \left\{ \lambda \in \R_{\geq 0} \mid \lambda^2 - a^*a \in Q(A,R) \right\}
\end{equation}
 for all $a \in \Z F_A$. Here, $\bar a$ denotes the canonical image of $a$ in $\Z \Gamma$.

We define $Q_n(A,R)$ to be the subset of those elements in $Q(A,R)$, which have a representation with all $b_{i,k} \in \C F_{A,n}$. 
% convex cone of hermitian squares of elements in $\C\Gamma_n$.
Then $Q_n(A,R)$ is finite-dimensional, and $\bigcup_n Q_n(A,R) = Q(A,R)$.  In fact,
$$
Q_n(A,R) = \left\{ \sum_{r \in R \cup \{0\}}\sum_{g,h \in F_{A,n}} C_{r,g,h} g^{-1}(1-r)h \:\Bigg|\: (C_{r,g,h})_{g,h \in \Gamma_n}\in  \bigoplus_{r \in R \cup \{0\}}M_{F_{A,n}}(\C)_+ \right\} .
$$
For $a\in\Z F_A$, we consider all $n\geq n_0$ where $n_0$ is such that $a\in\C F_{A,n_0}$ and $\Lambda - a^*a \in Q_{n_0}(A,R)$ for some $\Lambda\in\R$. Then by Equation \eqref{univg}, 
\begin{equation}
\label{semdefprimal}
\|a\|_u^2 \leq \min \left\{ \Lambda \in \R \:\Bigg|\: 
\begin{array}{c}
\exists (C_{r,g,h})_{g,h \in \Gamma_n}\in \bigoplus_{r \in R \cup \{0\}}M_{F_{A,n}}(\C)_+ \\
\textrm{ with } \Lambda - a^*a = \sum_{r \in R \cup \{0\}}\sum_{g,h} C_{g,h} g^{-1} (1-r)h 
\end{array}
\right\} , 
\end{equation}
where the right-hand side is now just a semidefinite programming problem in matrices $\bigoplus_{r \in R \cup \{0\}} M_{F_{A,n}}(\C).$ As shown in Equation \eqref{univg}, this bound becomes tight for $n\to\infty$. So, computing the value of the semidefinite program by bounding it from above with an accuracy of, say $1/n$, provides a convergent sequence of upper bounds on $\|a\|_u$. For more details on semidefinite programming see for example \cite{wo}. This shows the following:

\begin{corollary} \label{corup}
For any finitely presented group $(\Gamma,A,R)$ there is an algorithm computing a convergent sequence of upper bounds on $\|.\|_u$ on $\Z\Gamma$.
\end{corollary}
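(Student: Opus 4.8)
The plan is to package the facts established in the preceding discussion into a single algorithm. Fix $a \in \Z F_A$, and for each $n$ let $v_n \in \R \cup \{+\infty\}$ be the optimal value of the semidefinite program in Equation~\eqref{semdefprimal}, that is $v_n := \inf\{\Lambda \in \R : \Lambda - a^*a \in Q_n(A,R)\}$, with $v_n = +\infty$ when the program is infeasible. Since $Q_n(A,R) \subseteq Q(A,R)$, Equation~\eqref{univg} gives $v_n \geq \|\bar a\|_u^2$ for every $n$; and since $Q_n(A,R) \subseteq Q_{n+1}(A,R)$, the sequence $(v_n)_n$ is monotone decreasing. As $\|\bar a\|_u < \infty$, Equation~\eqref{univg} also provides some $\Lambda_0$ with $\Lambda_0 - a^*a \in Q(A,R) = \bigcup_m Q_m(A,R)$, so $v_n$ is finite for all $n$ beyond some index $n_0$.

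Next I would check that $v_n \to \|\bar a\|_u^2$. The one genuinely algebraic point is that $1 \in Q(A,R)$: the summand indexed by $r = 0$ in the definition of $Q(A,R)$ runs over all hermitian squares $b^*b$, and $1 = 1^*\cdot 1$ is among them. Hence the feasible set $\{\Lambda : \Lambda - a^*a \in Q(A,R)\}$ is upward closed, so every $\Lambda > \|\bar a\|_u^2$ satisfies $\Lambda - a^*a \in Q(A,R)$, and therefore $\Lambda - a^*a \in Q_n(A,R)$ for some $n$, giving $v_n \leq \Lambda$. As $\Lambda > \|\bar a\|_u^2$ was arbitrary, $\inf_n v_n = \|\bar a\|_u^2$, and by monotonicity $\lim_n v_n = \|\bar a\|_u^2$.

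It remains to turn each $v_n$ into effectively computable rational upper bounds. The coefficients of $a^*a$ and of the generators $g^{-1}(1-r)h$ are integers, so Equation~\eqref{semdefprimal} is a semidefinite program with rational data (the complex hermitian positivity constraints reduce to real symmetric ones in the standard way). By the known effective solvability of such programs, e.g.\ via the ellipsoid method as discussed in \cite{wo}, for each $n \geq n_0$ and each $k \in \N$ one can compute a rational $q_{n,k} \geq v_n$ with $q_{n,k} - v_n \leq 1/k$. Diagonalizing, the numbers $q_n := \min_{n_0 \leq m \leq n} q_{m,m}$ form a monotone decreasing sequence of rationals with $\lim_n q_n = \inf_n v_n = \|\bar a\|_u^2$; replacing each $q_n$ by a rational upper bound of $\sqrt{q_n}$ within $1/n$ and again taking running minima yields a monotone decreasing sequence of rational upper bounds on $\|\bar a\|_u$, computed uniformly from the input $a$.

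I expect the main obstacle to be this last step, namely arranging the approximation of the SDP value from above with a guaranteed, uniform accuracy. The ellipsoid method supplies this only given a priori bounds on the location and radius of the feasible region for each $n$; here such bounds are available from the explicit feasibility witness $(\Lambda_0, n_0)$ together with $1 \in Q(A,R)$, and they are themselves computable from $a$, which is what makes the diagonalization over $(n,k)$ effective. Crucially one never needs to solve any SDP exactly — only to over-approximate its value — so exactness and conditioning issues do not interfere with the construction of the monotone decreasing rational sequence.
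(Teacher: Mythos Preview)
Your proposal is correct and follows essentially the same route as the paper, whose argument is the paragraph immediately preceding the corollary: truncate $Q(A,R)$ to the finite-dimensional cones $Q_n(A,R)$, bound the optimal value of the resulting semidefinite program~\eqref{semdefprimal} from above to accuracy $1/n$, and use Equation~\eqref{univg} together with $\bigcup_n Q_n(A,R)=Q(A,R)$ to see that these bounds decrease to $\|\bar a\|_u^2$. You are more explicit than the paper about the upward closedness of the feasible set via $1\in Q(A,R)$, about the diagonalization over $(n,k)$, and about the a priori data needed for the ellipsoid method, but none of this changes the underlying approach.
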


Semidefinite programming duality provides another point of view on~(\ref{semdefprimal}). We claim that
\begin{equation}
\label{semdefdual}
\|a\|_u^2 \leq \max \left\{ \varphi(a^*a) \::\: \varphi : \C F_{A} \to \C, \: \varphi\left(Q_n(A,R)\right)\subseteq \R_{\geq 0}, \:\varphi(1)=1 \right\} ,
\end{equation}
where the right-hand side is a semidefinite program dual to~(\ref{semdefprimal}) for $n\geq n_0$, and the two optimal values coincide. To see this, note first that both semidefinite programs are feasible:~(\ref{semdefprimal}) is feasible by the assumption $n\geq n_0$, the second one is feasible since $(-1)\not\in Q_{n}(A,R)$ and the Hahn-Banach theorem. Therefore, for any feasible solutions $(C_{r,g,h})_{g,h}$ and $\varphi$,
\begin{equation}
\label{duality}
\varphi(a^*a) \leq \Lambda - \varphi\left(\sum_{r \in R \cup \{0\}}\sum_{g,h} C_{r,g,h} g^{-1} h\right) \leq \Lambda.
\end{equation}
We claim that there are $\varphi$ and $(C_{r,g,h})_{g,h}$ for which this bound is tight. This is so since $\Lambda - a^*a$ lies, for the optimal $\Lambda$, on the boundary of the cone $Q_n(A,R)$, so that there exists a functional $\varphi : \C F_{A,n} \to \C$ with $\varphi(\Lambda - a^*a) = 0$. This $\varphi$ is optimal since $\varphi(a^*a) = \Lambda$, which saturates~(\ref{duality}).

%We spell out more concretely how~(\ref{semdefdual}) is a semidefinite program. A functional $\varphi : \C\Gamma_{2n} \to \C$ with $\varphi\left(\Sigma^2\C\Gamma_n\right)\subseteq \R_{\geq 0}$ is nothing but a positive semidefinite matrix $(\varphi_{g,h})_{g,h \in \Gamma_n}$ satisfying the Hankel-type condition
%$$
%g^{-1}h = g'^{-1} h' \quad\Rightarrow\quad \varphi_{g,h} = \varphi_{g',h'}.
%$$
We end the discussion of semidefinite programming by noting that these ideas not only apply to the universal $C^*$-norm on group rings, but on any $*$-algebra with a finite presentation in terms of generators and equality or positivity relations for linear combinations of words in those generators. This has been worked out in more detail in~\cite{NPA2} as ``noncommutative polynomial optimization'' (note that the ``primal'' and ``dual'' conventions of~\cite{NPA2} are opposite to ours). Many applications and particular cases had already been studied earlier; this includes hierarchies of semidefinite programs for commutative moment problems~\cite{Lasserre} and noncommutative moment problems arising in quantum information theory~\cite{NPA}.

If there was any way to understand efficiently how fast the sequence of upper bounds converges, then one could turn Corollary \ref{corup} into an actual computation of the operator norm in the sense of Definition \ref{comp}. However, this seems to be out of reach even for reasonable groups and is impossible in general, as it would imply decidability of the word problem by Remark~\ref{comprem}. This is in contrast to the commutative case, for which convergence bounds have been derived~\cite{BH,DW}.
In order to provide interesting lower bounds on the norm in the universal representation, we have to make additional assumptions on $\Gamma$.

In the following, we need basic properties of the unitary dual of a discrete group. For details about the unitary dual and the Fell toplogy on it consult the informative appendices in \cite{bekval}.

\begin{lemma} Let $\Gamma$ be a group and $\Phi$ a set of unitary representations which is dense in the unitary dual of $\Gamma$. Then for every $a\in\Z\Gamma$,
$$
\|a\|_u = \sup \left\{ \|\varphi(a)\|_{\varphi} \mid \varphi \in \Phi  \right\}.
$$
\end{lemma}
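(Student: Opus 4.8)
The plan is to prove the two inequalities separately. The inequality $\sup\{\|\varphi(a)\|_\varphi \mid \varphi\in\Phi\} \leq \|a\|_u$ is immediate, since by the very definition of the universal norm one has $\|a\|_u = \sup\{\|\varphi(a)\|_\varphi\}$ with the supremum ranging over \emph{all} unitary representations of $\Gamma$, and $\Phi$ is merely a subset of these. Thus the entire content of the lemma lies in the reverse inequality $\|a\|_u \leq \sup\{\|\varphi(a)\|_\varphi \mid \varphi\in\Phi\}$.

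To establish it, I would first reduce to irreducible representations. Since $C^*_u\Gamma$ is a $C^*$-algebra, the norm of any element equals the supremum of $\|\pi(a)\|$ taken over all irreducible representations $\pi$, and the irreducible representations of $C^*_u\Gamma$ correspond precisely to the irreducible unitary representations of $\Gamma$ via the universal property. Hence $\|a\|_u = \sup\{\|\pi(a)\|_\pi \mid \pi \in \hat\Gamma\}$, and it suffices to bound each term $\|\pi(a)\|_\pi$ with $\pi$ irreducible by the supremum over $\Phi$.

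The key step is then to exploit density in the Fell topology. By hypothesis every $\pi \in \hat\Gamma$ lies in the Fell closure of $\Phi$, which is exactly the statement that $\pi$ is weakly contained in the family $\Phi$; see the appendices of \cite{bekval}. The basic functorial property of weak containment is that it is detected on the universal $C^*$-algebra: if $\pi$ is weakly contained in $\Phi$, then the kernel of the representation $\bigoplus_{\varphi\in\Phi}\varphi$ of $C^*_u\Gamma$ is contained in the kernel of $\pi$, so that $\pi$ factors through $\bigoplus_{\varphi\in\Phi}\varphi$ by a $*$-homomorphism, which is automatically norm non-increasing. This yields $\|\pi(a)\|_\pi \leq \sup\{\|\varphi(a)\|_\varphi \mid \varphi\in\Phi\}$ for every $\pi \in \hat\Gamma$. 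Taking the supremum over $\pi$ and combining with the reduction of the previous paragraph completes the argument.

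I expect the main obstacle to be the precise invocation of this last mechanism, namely the equivalence between membership in the Fell closure and weak containment, together with the monotonicity of the norm under weak containment. Both are standard facts about the unitary dual but must be cited carefully; by contrast, the reduction to irreducibles and the trivial inequality are routine $C^*$-algebraic bookkeeping.
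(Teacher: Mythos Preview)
Your argument is correct and is precisely the unpacking of what the paper leaves implicit: the paper's proof consists of the single sentence ``This is well-known and an immediate consequence of the definition of the Fell topology on the unitary dual,'' and your reduction to irreducibles together with the equivalence between Fell closure and weak containment (and the resulting norm domination) is exactly how one justifies that sentence. There is no substantive difference in approach.
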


\begin{proof}
This is well-known and an immediate consequence of the definition of the Fell topology on the unitary dual.
\end{proof}

Let us now give a definition of property RFD.

\begin{definition} \label{rfd}
A group $\Gamma$ is called \emph{residually finite-dimensional (RFD)} if the set of finite-dimensional unitary representations is dense in the unitary dual of $\Gamma$.
\end{definition}

Note that if $\Gamma$ is finitely generated and RFD, then finite-dimensional representations must separate the elements of $\Gamma$, and $\Gamma$ follows to be residually finite by Mal'cev's theorem~\cite{Malcev}.

Finitely generated Fuchsian groups and fundamental groups of closed hyperbolic 3-manifolds which fiber over the circle are known to be RFD, see \cite{lubsh}.
Indeed, it is well-known that free groups are RFD, see for example Theorem 2.2 in \cite{lubsh}. Theorem 2.8 in \cite{lubsh} shows that surface groups and fundamental groups of closed hyperbolic 3-manifolds that fiber over the circle have RFD. Now, it is easy to see that RFD passes to finite index extensions. This implies the claim for Fuchsian groups since every Fuchsian group contains a free group or a surface group with finite index. That every fundamental group of a closed hyperbolic 3-manifold admits a subgroup of finite index which fibers over the circle is known as ThurstonÕs Virtual Fibration Conjecture.

We write $f$ for the direct sum of all finite-dimensional unitary representations of $\Gamma$. The following observation can be regarded as an alternative definition of RFD:

\begin{lem}
$\Gamma$ is RFD if and only if $\|.\|_u = \|.\|_{f}$ on $\Z\Gamma$.
\end{lem}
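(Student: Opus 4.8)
The plan is to read both implications off the dictionary between the topology of the unitary dual and the universal $C^*$-norm, using the preceding lemma for one direction and Fell's description of weak containment for the other. Throughout I would treat $f = \bigoplus_\varphi \varphi$ as the direct sum over a set of representatives of all finite-dimensional unitary representations, one from each unitary equivalence class; since a finite-dimensional representation of dimension $n$ is a point of $\mathrm{Hom}(\Gamma, U(n)) \subseteq U(n)^\Gamma$, these form an honest set, and unitarily equivalent representations induce the same seminorm, so $\|.\|_f$ is well defined. The elementary fact I would invoke repeatedly is that the norm of a block-diagonal operator is the supremum of the norms of its blocks, so that $\|a\|_f = \|f(a)\|_f = \sup_\varphi \|\varphi(a)\|_\varphi$, the supremum ranging over all finite-dimensional $\varphi$.

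For the direction from RFD to equality of norms, suppose $\Gamma$ is RFD, so that by definition the set $\Phi$ of finite-dimensional unitary representations is dense in the unitary dual. The preceding lemma then gives $\|a\|_u = \sup\{\|\varphi(a)\|_\varphi \mid \varphi \in \Phi\}$ for every $a \in \Z\Gamma$, and combining this with the block-diagonal identity yields $\|a\|_u = \sup_{\varphi \in \Phi}\|\varphi(a)\|_\varphi = \|f(a)\|_f = \|a\|_f$, which is exactly the claimed equality.

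For the converse, I would assume $\|.\|_u = \|.\|_f$ on $\Z\Gamma$ and let $\pi$ be an arbitrary point of the unitary dual, i.e.\ an irreducible representation. The trivial estimate $\|\pi(a)\|_\pi \leq \|a\|_u$ together with the hypothesis gives $\|\pi(a)\|_\pi \leq \|a\|_f$ for all $a$. By Fell's theorem identifying weak containment with domination of the associated seminorms, this inequality says precisely that $\pi$ is weakly contained in $f$, hence in the family of finite-dimensional representations; and weak containment in a family is the same as membership in its closure in the Fell topology. Thus every $\pi$ in the unitary dual lies in the closure of the finite-dimensional representations, so these are dense and $\Gamma$ is RFD. (One should note in passing that a finite-dimensional representation decomposes into finite-dimensional irreducibles, so weak containment in $f$ is the same as membership in the Fell closure of the finite-dimensional points of the dual, which is what density there means.)

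The one nonelementary ingredient, and therefore the point to be careful about, is the equivalence for representations $\sigma, \rho$ between the analytic statement $\|\sigma(a)\| \leq \|\rho(a)\|$ for all $a \in \C\Gamma$ and the topological statement that $\sigma$ is weakly contained in $\rho$, i.e.\ lies in the Fell closure of $\rho$ together with its subrepresentations. This is Fell's classical characterization of the topology on the unitary dual (see the appendices of \cite{bekval}); the forward direction already packages one half of it through the preceding lemma, and the converse direction uses the other half. Everything else reduces to the block-diagonal norm computation and the set-theoretic remark making $f$ well defined.
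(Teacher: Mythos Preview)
Your forward direction is fine; the gap is in the converse. You assert that the inequality $\|\pi(a)\|\le\|f(a)\|$ for all $a\in\Z\Gamma$ already yields weak containment $\pi\prec f$ by Fell's theorem. But Fell's characterization requires the seminorm domination on all of $\C\Gamma$, and passing from $\Z\Gamma$ to $\C\Gamma$ is exactly the nontrivial point. Homogeneity and $\ell^1$-continuity take you from $\Z\Gamma$ to $\R\Gamma$, but no further: for $\Gamma=\Z$ and $z\in S^1\setminus\R$, the characters $\chi_z$ and $\chi_{\bar z}$ satisfy $|p(z)|=|p(\bar z)|$ for every Laurent polynomial $p$ with \emph{integer} (indeed real) coefficients, yet $\chi_z\not\prec\chi_{\bar z}$ since they are distinct points of the Hausdorff dual $\widehat{\Z}$. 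So seminorm domination on $\Z\Gamma$ does not imply weak containment in general.

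The paper's proof is devoted precisely to bridging $\Z\Gamma\to\C\Gamma$. The key extra ingredient, which you do not invoke, is that $f$ is self-conjugate (the complex conjugate of a finite-dimensional representation is again finite-dimensional). This gives $\|a+ib\|_f=\|a-ib\|_f$ for $a,b\in\R\Gamma$, and combined with the analogous identity for $\|.\|_u$ one obtains a uniform comparison $\|.\|_f\le\|.\|_u\le 2\|.\|_f$ on $\C\Gamma$; the two $C^*$-completions then coincide, and uniqueness of the $C^*$-norm forces equality. Your argument can be repaired along the same lines (use self-conjugacy of $f$ to show the kernel of $C^*_u\Gamma\to C^*_f\Gamma$ is conjugation-invariant and then kill its real part using the $\R\Gamma$-hypothesis), but as written it skips the one genuinely nonformal step.
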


\begin{proof}
Again by definition of the Fell topology, we know that $\|.\|_u=\|.\|_f$ on $\C\Gamma$ if and only if $\Gamma$ is RFD. By homogeneity of norms and density of the inclusion $\Q\subseteq\R$, the assumption $\|.\|_u = \|.\|_f$ on $\Z\Gamma$ implies that this equality also holds on $\R\Gamma$, and the problem is to show that this implies the equality on all of $\C\Gamma$.

We decompose $\C\Gamma = \R\Gamma \oplus i\cdot \R\Gamma$. By assumption, the two norms coincide on each of the two summands; moreover, for $a,b\in\R\Gamma$,
$$
\|a+ib\|_u = \|a-ib\|_u ,\qquad \|a+ib\|_f = \|a-ib\|_f
$$
since every representation of $\Gamma$ has a complex conjugate, and taking complex conjugates preserves finite-dimensionality. 
Now it follows from elementary estimates like~\cite{Fri1}*{Prop.~5.6} that $\|.\|_u$ and $\|.\|_f$ differ on $\C\Gamma$ at most by a factor of $2$. So,
$$
\|.\|_f \leq \|.\|_u \leq 2 \|.\|_f .
$$
In particular, the $C^*$-completions $C^*_u\Gamma$ and $C^*_f\Gamma$ are canonically isomorphic and the canonical surjection
$$
\varphi : C^*_u\Gamma \twoheadrightarrow C^*_f\Gamma.
$$
is an isomorphism. Now the assertion follows from the uniqueness of the norm on a $C^*$-algebra.
\end{proof}

We are now ready to prove Theorem \ref{mainthm}.

\begin{proof}[Proof of Theorem \ref{mainthm}:]
The first assertion is clear by Theorem~\ref{RFWP} since the assumption implies that $\Gamma$ is residually finite.

In view of Corollary \ref{corup} it remains to provide a convergent sequence of lower bounds on the operator norm in the universal representation. Let $a \in \Z  F_A$. Let $n \in \N$ and consider the set 
$$X(n) = \left\{ (u_a)_{a \in A} \subset U(n)^A \mid r((u_a)_{a \in A})=1 \ \forall r \in R \right\} \subseteq U(n)^A.$$
Clearly, $X(n)$ is a compact real algebraic subset of $\R^{|A| \cdot 2n^2}$. Denote by $D(n) = \{ \xi \in \C^n \mid \|\xi\| \leq 1 \}$ and the function
$$f_n \colon X(n) \times D(n) \to \R, \quad f_n((u_a)_{a \in A},\xi) := \|a((u_a)_{a \in A}) \xi\|^2.$$
Denote the maximum of $f_n$ on $X(n) \times D(n)$ by $\alpha_n$.
By the previous lemmas and the assumptions on $\Gamma$, we have
$$\|\bar a\|_u = \sup \left\{ \alpha_n^{1/2} \mid n \in \N \right\}.$$
Thanks to Tarski's real quantifier elimination~\cite{Tarski}, each $\alpha_n$ is computable, and therefore $(\alpha_n^{1/2})_{n\in\N}$ is the required sequence of lower bounds. This proves the claim.
\end{proof}

Again, note that this theorem and its proof directly generalize from group $*$-algebras to arbitrary finitely presented $*$-algebras.

\begin{question}
The group $\Gamma=SL_3(\Z)$ is known not to be RFD, see \cite{bekka}. Is the function
$$\Z \Gamma \ni a \mapsto \|a\|_u \in \R$$
computable?
\end{question}

We now turn to the proof of Theorem \ref{precthm}. We need the following lemma.

\begin{lemma} \label{choi}
Let $a \in \Z F_A,$ $n=\vert A\vert$, and let $d \in \N$ be the length of the longest word appearing in the support of $a$. There exists a unitary representation $\pi \colon F_A \to U(k)$ of dimension $k:=4n^{d}$ and a unit vector $\xi \in \C^k$ such that
$\|a\|_u = \|\pi(a) \xi\|_{\pi}.$
\end{lemma}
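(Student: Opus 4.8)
The plan is to attain the supremum $\|a\|_u=\sup_\pi\|\pi(a)\|$, taken over finite-dimensional unitary representations (legitimate since $F_A$ is RFD), by an \emph{honest} finite-dimensional representation, and then to control its dimension. The key observation is that the whole computation of $\|\pi(a)\xi\|$ only sees the local data $\phi_\pi(g):=\langle\pi(g)\xi,\xi\rangle$ for $g$ in the ball $B(2d)$ of radius $2d$, because $\|\pi(a)\xi\|^2=\sum_{w,w'}a_w\overline{a_{w'}}\,\phi_\pi(w'^{-1}w)$ and $a$ is supported on the ball $B(d)$. First I would take a sequence of finite-dimensional representations with cyclic unit vectors whose values $\|\pi(a)\xi\|$ converge to $\|a\|_u$, and pass to a limit of the associated positive-definite functions restricted to the \emph{finite} set $B(2d)$. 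Since these take values in a compact set, a subsequence converges to a function $\phi$ on $B(2d)$ which is still positive-definite in the sense that $G:=\big(\phi(w'^{-1}w)\big)_{w,w'\in B(d)}\succeq 0$ and $\phi(e)=1$, and which by construction satisfies $\sum_{w,w'}a_w\overline{a_{w'}}\,\phi(w'^{-1}w)=\|a\|_u^2$.

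The core of the argument is to turn $\phi$ into a genuine finite-dimensional unitary representation, and this is where freeness of $F_A$ enters decisively: since there are no relations, \emph{any} assignment of unitaries to the generators defines a representation, so it suffices to build unitaries reproducing $\phi$ on $B(2d)$. I would realize $G$ as the Gram matrix of vectors $(e_w)_{w\in B(d)}$ in a space $K_0$ of dimension $\mathrm{rank}(G)$, and set $\xi=e_e$, a unit vector. For each generator $s\in A$, the assignment $e_w\mapsto e_{sw}$, defined for those $w\in B(d)$ with $sw\in B(d)$ (which includes the case where $w$ begins with $s^{-1}$, so that $sw$ is \emph{shorter}), is a well-defined isometry on its domain, because $\langle e_{sw},e_{sw'}\rangle=\phi((sw')^{-1}(sw))=\phi(w'^{-1}w)=\langle e_w,e_{w'}\rangle$, both group elements lying in $B(2d)$. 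In finite dimensions the initial and final spaces of a partial isometry have equal dimension, so it extends to a unitary $u_s$ on $K_0$. Letting $\pi$ be the resulting representation of $F_A$, an induction on word length shows $\pi(w)\xi=e_w$ for every reduced word $w$ with $|w|\le d$: one peels off the leftmost letter and checks that both the length-increasing case (for $u_s$) and the length-decreasing case (for $u_s^{-1}$, using that words starting with $s^{-1}$ lie in the domain of the shift) act as prescribed. Consequently $\|\pi(a)\xi\|=\|a\|_u$, so the norm is attained rather than merely approximated.

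It remains to bound the dimension, and this is the main obstacle. The construction above only yields $\dim K_0=\mathrm{rank}(G)\le|B(d)|$, the number of reduced words of length at most $d$, which exceeds $4n^d$ in general; obtaining the sharp bound $k=4n^d$ therefore requires a more economical representation. The idea is to avoid spending one dimension per reduced word: instead of the full span of word-vectors one encodes the shift action on a truncated Fock-type space, where the generators act by a shift together with its unitary completion, so that the dimension grows like $n^d$ rather than $(2n-1)^d$, and the completion of the length-increasing partial isometries to unitaries costs only a bounded factor (a single ancillary copy). The delicate point is to carry out this encoding so that it still reproduces the optimal local data $\phi$ on $B(2d)$, and hence still attains $\|a\|_u$, while keeping the dimension at $4n^d$ and completing the partial isometries to unitaries consistently for all generators simultaneously. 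The attainment of the optimum is already guaranteed by the compactness of the set of truncated positive-definite functions used in the first step, so the genuinely hard part is exactly this dimension-efficient, simultaneous unitary completion.
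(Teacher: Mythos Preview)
Your construction via limits of truncated positive-definite functions and extension of partial isometries is a legitimate way to produce \emph{some} finite-dimensional representation attaining $\|a\|_u$, but it is both more circuitous than the paper's argument and, as you yourself flag, it does not reach the stated dimension bound. The paper bypasses the RFD approximation and the limit of positive-definite functions entirely: by compactness of the state space of $C^*_u F_A$ one takes directly a (possibly infinite-dimensional) representation $\sigma$ and a unit vector $\xi'$ with $\|\sigma(a)\xi'\|=\|a\|_u$, and then compresses to the finite-dimensional subspace $H=\mathrm{span}\{\sigma(g)\xi':|g|\le d\}$.

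The genuine gap in your proposal is exactly where you locate it: the dimension. Your Gram-matrix construction yields only $\dim K_0\le |B(d)|$, and the closing paragraph about a ``Fock-type encoding'' with ``simultaneous unitary completion'' is a wish, not an argument. The tool you are missing is precisely Choi's trick (Halmos dilation): for each generator $v_i$ one sets, with $p$ the projection onto $H$,
\[
u_i=\begin{pmatrix} p\sigma(v_i)p & \sqrt{1_H-p\sigma(v_i)p\,\sigma(v_i)^*p}\\[2pt] \sqrt{1_H-p\sigma(v_i)^*p\,\sigma(v_i)p} & -\,p\sigma(v_i)^*p\end{pmatrix}\in\mathcal{L}(H\oplus H).
\]
Each $u_i$ is unitary by a direct check, the definitions are independent for different generators (so no ``consistency'' problem arises---freeness does the rest), and one verifies $\pi(a)(\xi',0)^t=(\sigma(a)\xi',0)^t$, so $\|\pi(a)\xi\|=\|a\|_u$. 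This replaces your unspecified ``economical encoding'' by an explicit doubling $H\rightsquigarrow H\oplus H$, which is what produces the factor $2$ in the paper's bound $k=2\dim H$. Your approach, once completed, would essentially have to rediscover this dilation; without it the proof is incomplete.
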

\begin{proof}
The proof is an application of what is known as Choi's trick \cite[Theorem 7]{choi} which gives an efficient proof that the group $\Gamma=F_A$ is RFD. Let $a \in \Z \Gamma$. It is a standard consequence of the compactness of the state space of $C^*_u\Gamma$ that the operator norm $\|a\|_u$ is achieved at some vector in some unitary representation.
Let $\sigma \colon F_A \to U(H_{\sigma})$ be a unitary representation and $\xi' \in H_{\sigma}$ be a unit vector with $\|a\|_u = \|\sigma(a) \xi'\|_{\sigma}$. Consider $H$, the linear span of $\sigma(g)\xi$ for all $g$ with length less or equal $d$. The dimension of $H$ is at most $2n^{d}$. Let $p$ be the orthogonal projection from $H_{\sigma}$ onto $H$ and denote the generators of $F_A$ by $v_1,\dots,v_n$.

We set
$$u_i:= \left(\begin{array}{cc}p\sigma(v_i)p & \sqrt{1_H-p\sigma(v_i)p \sigma(v_i)^*p} \\ \sqrt{1_H-p\sigma(v_i)^*p\sigma(v_i)p} & -p\sigma(v_i)^*p\end{array}\right)\in\mathcal{L}(H\oplus H).$$
It is easy to check that $u_1,\dots,u_n$ are unitary and we let $\pi$ be the unitary representation on $H \oplus H$ associated with them. Again, it is easy to check that
$\pi(a)(\xi',0)^t = (\sigma(a)\xi,0)$ and hence, $\|a\|_u = \|\pi(a)\xi\|_{\pi}$ for the unit vector $\xi = (\xi',0)^t$. This finishes the proof.
\end{proof}

We are now ready to prove Theorem \ref{precthm}.
\begin{proof}[Proof of Theorem \ref{precthm}:] Using the notation of the proof of Theorem \ref{mainthm}, Lemma \ref{choi} gives that $\|a\|_u = \alpha_{k}$ for some sufficiently large and computable integer $k$. Clearly, $\alpha_k$ is defined in the first order language of real closed fields. Moreover, the equation $\alpha = \alpha_k$ is decidable if $\alpha$ defined in the same language, again thanks to Tarski's theorem.
\end{proof}

As a corollary to Theorem \ref{precthm} we can solve algorithmic problems which require precise information about the spectrum of some element in the integral group ring.

\begin{corollary} There is an algorithm that takes as input $a \in \Z F_A$ and decides whether $a$ is invertible in the universal group $C^*$-algebra.
\end{corollary}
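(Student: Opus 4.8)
The plan is to reduce invertibility of a general element $a$ to the invertibility of the two positive elements $a^*a$ and $aa^*$, and then to decide invertibility of a positive element by locating the bottom of its spectrum. Recall that in any unital $C^*$-algebra an element $a$ is invertible if and only if it admits both a left and a right inverse, and that $(a^*a)^{-1}a^*$ is a left inverse whenever $a^*a$ is invertible, while $a^*(aa^*)^{-1}$ is a right inverse whenever $aa^*$ is invertible; conversely invertibility of $a$ forces invertibility of $a^*a$ and of $aa^*$. Thus $a$ is invertible in $C^*_u(F_A)$ if and only if both $a^*a$ and $aa^*$ are invertible. Since $a \in \Z F_A$, both $a^*a$ and $aa^*$ again lie in $\Z F_A$ and are self-adjoint, indeed positive, so it suffices to decide invertibility of a positive element $b \in \Z F_A$.

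For positive $b$ we have $\mathrm{spec}(b) \subseteq [0,\|b\|_u]$, and $b$ is invertible precisely when $0 \notin \mathrm{spec}(b)$, i.e.\ when $\mu_1 := \min\mathrm{spec}(b) > 0$. First I would show that $\mu_1$ is definable in the first-order language of real closed fields, by exactly the argument proving Theorem \ref{precthm}, with the maximum replaced by a minimum. Concretely, with $n = |A|$ and $d$ the length of the longest word in the support of $b$, set $k := 4 n^{d}$ and, as in the proof of Theorem \ref{mainthm}, let $X(k) \subseteq U(k)^A$ be the compact real algebraic set of $A$-tuples of $k \times k$ unitaries and $D(k)$ the unit ball of $\C^k$. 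I claim that
$$\mu_1 = \min \left\{ \|b((u_a)_{a \in A})\xi\|^2 \mid (u_a)_{a\in A} \in X(k),\ \xi \in D(k),\ \|\xi\|=1 \right\}.$$
Indeed, every finite-dimensional representation factors through $C^*_u(F_A)$, so its spectrum cannot dip below $\min\mathrm{spec}(b)$, giving ``$\geq$''; for ``$\leq$'' one notes that $\min\mathrm{spec}(b)$ is attained at some vector state and applies Choi's trick (Lemma \ref{choi}) to compress the corresponding representation to dimension $k$ while preserving the value $\|b\,\xi\|^2$. The right-hand side is the minimum of a polynomial function over a compact real algebraic set, hence definable in the language of real closed fields, so by Tarski's theorem \cite{Tarski} we obtain an explicit such definition of $\mu_1$, and the truth of the sentence $\mu_1 > 0$ is decidable.

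Putting the pieces together, the algorithm computes real-closed-field definitions of $\min\mathrm{spec}(a^*a)$ and of $\min\mathrm{spec}(aa^*)$, decides whether each is strictly positive, and outputs that $a$ is invertible if and only if both are. The main point requiring care, and the only place where genuine work beyond Theorem \ref{precthm} enters, is the adaptation of Choi's trick to the bottom of the spectrum: unlike the operator norm, which is a supremum that is automatically not increased by passing to a subrepresentation, the bottom of the spectrum is an infimum, and one must verify both that compressing the optimal representation does not raise the value and that no finite-dimensional representation can produce a value below $\min\mathrm{spec}(b)$, the latter being precisely weak containment in the universal representation. I also note that, since $F_A$ is RFD and hence $C^*_u(F_A)$ is stably finite, left-invertibility already implies invertibility, so testing $a^*a$ alone would in fact suffice; but checking both $a^*a$ and $aa^*$ avoids invoking finiteness and costs nothing.
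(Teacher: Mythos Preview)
Your argument is correct in substance but takes a different route from the paper. The paper avoids re-running Choi's trick for the bottom of the spectrum: instead it picks a computable integer upper bound $\Lambda \geq \|a^*a\|_u$ (for instance $\Lambda = \|a\|_1^2$) and observes via the spectral theorem that $a^*a$ fails to be invertible if and only if $\|\Lambda - a^*a\|_u = \Lambda$. Since $\Lambda - a^*a \in \Z F_A$ and $\Lambda$ is an integer, this equality is decidable directly by Theorem~\ref{precthm}, with no further work. Your approach instead shows that $\min\mathrm{spec}(b)$ is itself first-order definable over the reals; this is exactly the variant the paper alludes to immediately after stating Theorem~\ref{precthm} (definability of the spectral endpoints $\mu_1,\mu_2$), so you are effectively proving that remark and then applying it. What the paper's route buys is brevity: a one-line reduction to a theorem already in hand. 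What your route buys is the stronger conclusion that the bottom of the spectrum, not merely the predicate ``$0$ lies in the spectrum'', is definable.

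Two small points. First, your displayed formula actually computes $\mu_1^2$ rather than $\mu_1$: for positive $T$ and unit $\xi$ one has $\|T\xi\|^2 = \langle T^2\xi,\xi\rangle$, whose minimum over unit vectors is $(\min\mathrm{spec}\,T)^2$; harmless for deciding positivity, but worth correcting (or replace $\|b\xi\|^2$ by $\langle b\xi,\xi\rangle$). Second, you are right to be careful about checking both $a^*a$ and $aa^*$; the paper simply asserts ``$a$ is invertible iff $a^*a$ is invertible'', tacitly using stable finiteness of $C^*_u(F_A)$, which, as you observe, follows from RFD.
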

\begin{proof}
For $a = \sum_g a_g g \in \Z F_A$, we write $\|a\|_1 := \sum_g |a_g|$. It is clear that $\|a\|_u \leq \|a\|_1$ for all $a \in \Z F_A$.
Now, the element $a \in \Z F_A$ is invertible in $C_u^* F_A$ if and only if $a^*a \in \Z F_A$ is invertible. Let $\Lambda \in \Z$ be a computable upper bound for $\|a^*a\|_u$ such as $\|a\|^2_1$. Then the spectral theorem implies that
$\|\Lambda - a^*a\|_u = \Lambda$ if and only if $a^*a$ is not invertible in $C^*_u F_A$. This proves the claim, since Theorem \ref{precthm} provides a decision procedure for this equality.
\end{proof}

\section{Lower bounds on the norm in the left regular representation}
\label{thmamen}

In this section we provide a convergent sequence of lower bounds on the norm in the left-regular representation. In the case of amenable groups, this leads to a computation of the natural norm on the integral group ring, using Corollary \ref{corup} Êand the fact that $\Vert . \Vert_\lambda=\Vert . \Vert_u$. Note that formulas for norms in left regular representations for certain classes of groups and elements have already been obtained in  \cite{ake, leh} for example.

Recall that the group ring $\Z\Gamma$ naturally commes equipped with the trace
$$
\tau \colon\Z\Gamma \to \Z ,\qquad \sum_g a_g g \mapsto a _e, 
$$
which extends uniquely to a tracial state on $C^*_{\lambda}\Gamma$. This trace is faithful, i.e.\ we have that $\tau(a^*a)=0$ implies $a=0$. the following result is well-known and we include a proof for convenience.

\begin{lemma} \label{amenable} Let $a \in \Z \Gamma$. Then,
$$
\|a\|_{\lambda} = \sup \left\{ \tau(b^*a^*ab)^{1/2} \mid b \in \C \Gamma, \tau(b^*b) \leq 1 \right\} = \sup \left\{ \tau((a^*a)^n)^{1/2n} \mid n \in \N \right\} .
$$
\end{lemma}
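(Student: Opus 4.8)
The plan is to establish the claimed identity
$$
\|a\|_{\lambda} = \sup \left\{ \tau(b^*a^*ab)^{1/2} \mid b \in \C \Gamma, \tau(b^*b) \leq 1 \right\} = \sup \left\{ \tau((a^*a)^n)^{1/2n} \mid n \in \N \right\}
$$
by working inside the von Neumann algebra $L\Gamma = C^*_{\lambda}\Gamma''$ equipped with its faithful tracial state $\tau$, using the fact that the GNS representation of $\tau$ is unitarily equivalent to the left-regular representation acting on $\ell^2\Gamma = \overline{\C\Gamma}^{\tau}$. Throughout, $x := a^*a$ is a positive self-adjoint element, and $\|a\|_{\lambda}^2 = \|x\|_{\lambda}$ equals the supremum of the spectrum of $x$ in $C^*_{\lambda}\Gamma$.

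\emph{First equality.} Here I would observe that the cyclic vector $\delta_e \in \ell^2\Gamma$ corresponding to $1 \in \C\Gamma$ is separating for $L\Gamma$, so the map $b \mapsto b\delta_e$ identifies $\C\Gamma$ with a dense subspace of $\ell^2\Gamma$, under which $\tau(b^*b) = \langle b\delta_e, b\delta_e\rangle = \|b\delta_e\|_2^2$ and $\tau(b^*a^*ab) = \|\lambda(a)\, b\delta_e\|_2^2$. Hence the first supremum is exactly
$$
\sup \left\{ \|\lambda(a)\eta\|_2 \mid \eta \in \C\Gamma\,\delta_e,\ \|\eta\|_2 \leq 1 \right\},
$$
which equals $\|\lambda(a)\|$ because $\C\Gamma\,\delta_e$ is dense in $\ell^2\Gamma$ and the operator norm of a bounded operator is computed on any dense subset of the unit ball. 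This step is essentially a translation and should be routine.

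\emph{Second equality.} This is the spectral step and the main point of the lemma. I would use the trace to build the moment sequence $m_n := \tau(x^n)$ of the positive element $x = a^*a$, and interpret $m_n = \int t^n \, d\mu(t)$ where $\mu$ is the spectral distribution of $x$ with respect to $\tau$, a Borel probability measure supported on the spectrum $\mathrm{spec}(x) \subseteq [0, \|x\|_{\lambda}]$. The quantity $\tau((a^*a)^n)^{1/2n} = m_n^{1/2n} = (\int t^n d\mu)^{1/2n}$ is, up to the square root, the $L^{2n}(\mu)$-norm of the coordinate function, and the standard fact that $\|t\|_{L^{2n}(\mu)} \to \|t\|_{L^{\infty}(\mu)} = \max \mathrm{supp}(\mu)$ as $n \to \infty$, together with monotonicity of $L^p$-norms in $p$ for a probability measure, gives that the sequence increases to $(\max \mathrm{supp}\,\mu)^{1/2}$. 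The key remaining verification is that $\max \mathrm{supp}\,\mu = \|x\|_{\lambda}$, i.e.\ that $\tau$ does not ``miss'' the top of the spectrum; this is exactly where faithfulness of $\tau$ (equivalently, that $\delta_e$ is a separating, indeed cyclic, vector) is used, since a faithful trace charges every nonempty spectral subset, so no spectral mass can be absent near the top.

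The step I expect to be the genuine obstacle is this last point — confirming $\sup_n m_n^{1/2n} = \|x\|_{\lambda}$ and hence $= \|a\|_{\lambda}^2$. The clean way to see it without measure theory is purely operator-theoretic: since $\tau$ is a faithful state, $m_n^{1/2n} = \tau(x^n)^{1/2n} \leq \|x^n\|_{\lambda}^{1/2n} = \|x\|_{\lambda}^{n/2n}\cdot(\text{const})^{1/2n}$ gives the upper bound after taking $n \to \infty$, while for the lower bound one applies the Cauchy–Schwarz or power-mean inequality $\tau(x^{2n}) \geq \tau(x^n)^2 / \tau(1) = \tau(x^n)^2$ to show the sequence $\tau(x^{2^k})^{1/2^k}$ is monotone increasing, and then identifies its limit with the norm via the faithfulness of $\tau$ applied to spectral projections of $x$ near its top eigenvalue. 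Combining monotonicity with the trivial upper bound $\tau(x^n) \leq \|x\|_{\lambda}^n$ yields both that the supremum equals $\|x\|_{\lambda}$ and that the second supremum in the statement is attained in the limit, completing the proof.
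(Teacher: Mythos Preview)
Your proof is correct and follows essentially the same route as the paper: the first equality via the identification of $\ell^2\Gamma$ with the $\tau$-completion of $\C\Gamma$, and the second via the trivial upper bound $\tau(x^n)\leq\|x\|_\lambda^n$ together with the lower bound coming from faithfulness of $\tau$ on a spectral projection of $x=a^*a$ near the top of its spectrum. The paper's argument is exactly your operator-theoretic version without the measure-theoretic detour (and note two cosmetic slips: your ``$L^{2n}$'' should read $L^n$, and the displayed upper bound $\|x\|_\lambda^{n/2n}\cdot(\text{const})^{1/2n}$ is garbled---you simply mean $\tau(x^n)^{1/2n}\le\|x\|_\lambda^{1/2}$).
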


\begin{proof}
By definition, $\ell_2(\Gamma)$ is the completion of $\C\Gamma$ with respect to the norm $b\mapsto \tau(b^*b)^{1/2}$, and it carries a natural action of $\Gamma$ by left multiplication. This directly implies the first equation.

We now consider the second equation. The inequality $\tau((a^*a)^n) \leq \|a^*a\|_{\lambda}^n\leq \|a\|_{\lambda}^{2n}$ is clear, so that the main task is proving the other direction. We work in the group von Neumann algebra ${\mathcal N}\Gamma$. Fix any $\eps>0$ and consider the spectral projection $p$ defined by applying Borel functional calculus to $a^*a$ with respect to the indicator function of the interval $[ \|a^*a\|_{\lambda} - \eps, \|a^*a\|_{\lambda} ]$. Then $a^*a \geq \left(\|a^*a\|_{\lambda} - \eps\right) p$, so that
$$
\tau((a^*a)^n) \geq \tau((\|a^*a\|_{\lambda} - \eps)^n p) = \left(\|a^*a\|_{\lambda} - \eps\right)^n \tau(p) .
$$
Faithfulness of $\tau$ together with $p\neq 0$ implies $\tau(p)>0$, so that the right-hand side of
$$
\tau((a^*a)^n)^{1/2n} \geq (\|a^*a\|_{\lambda} - \eps)^{1/2} \tau(p)^{1/2n}
$$
tends to $(\|a^*a\|_{\lambda}-\eps)^{1/2}$ as $n\to\infty$. The conclusion follows since $\eps$ was arbitrary.
\end{proof}

Note that the proof, and therefore the lemma, apply similarly to any von Neumann algebra equipped with a faithful tracial state $\tau$.
We are now ready to prove Theorem \ref{mainamenable}.

\begin{proof}[Proof of Theorem \ref{mainamenable}:]
Concerning $(1) \Rightarrow (2)$, Lemma~\ref{amenable} provides a convergent sequence of computable lower bounds and Lemma~\ref{univ} a convergent sequence of computable upper bounds. $(2) \Rightarrow (1)$ was essentially answered in Remark \ref{comprem}: We are able to algorithmically decide for $g \in  F_A$ whether $\|1- \bar g\|_{\lambda}=0$ or $\|1- \bar g\|_{\lambda}\geq 1$, and it is easy to see that one of the two cases must occur.
\end{proof}

%Some ideas: 
%
%Let $\varphi_H \colon G \to \C$ be the positive definite function given by $\varphi(g) = 1$ for $g \in H$ and $\varphi_H(g)=0$ for $g \not \in H$. Now, let $a \in \Z G$.
%$$\|a\|_{\ell^2(G/H)} = \sup \left\{ \varphi_H((a^*a)^n)^{1/n} \mid n \in \N \right\}.$$ So, if the membership problem is decidable, we can get an approximation from below.
%
%Let $G$ be a RFD group and $H \subset G$ be a proper subgroup. Consider the representation $\ell^2(G/H)$ and $\Omega \in \ell^2(G/H)$ corresponding to the coset of $eH$. Each element of $H$ fixes $\Omega$, whereas every element of its complement moves $\Omega$ to a vector, which is orthogonal to $\Omega$.
%
%Now, since $G$ is RFD, for every finite subset $K \subset G$ and $\varepsilon>0$, there exists a finite-dimensional representation $\cH$ of $G$ and a unit vector $\Omega' \in \cH$, such that
%$$\|\Omega' - g \Omega'\|< \varepsilon, \quad \forall g \in K \cap H.$$
%and $$|\langle \Omega', g \Omega' \rangle |< \varepsilon, \quad g \in K \cap (G \setminus H).$$
%

\section{Relation to the Kirchberg's QWEP Conjecture} \label{kirchberg}

Let us finish this note with a question and a relation to some famous open problems in the theory of operator algebras.

\begin{question} \label{product}
Consider $A= \{x,y,z,w\}$ and 
$$\Gamma =  F_2 \times  F_2 = \langle x,y,z,w \mid [x,z], [x,w], [y,z], [y,w] \rangle.$$ Is the function $\Z \Gamma \ni a\mapsto \| a\|_{u} \in \R$ computable?
\end{question}

Kirchberg's seminal work~\cite{Kir} shows that a positive answer to the famous Connes Embedding Problem is equivalent to $ F_2 \times  F_2$ being RFD. The question whether $F_2 \times F_2$ (or equivalently $F_n \times F_n$ for any $n \geq 2$) is RFD is generally known as a version of Kirchberg's QWEP Conjecture. Consult \cite{MR2072092} and the references therein for more details about these fundamental conjectures. By Theorem~\ref{mainthm}, a positive solution to any of these conjectures would also imply a positive answer to Question~\ref{product}. 
This elucidates the importance of the computability of $\|.\|_u$ on the group ring $\Z(F_n\times F_n)$: if this norm is not computable, this would refute Connes Embedding Problem and Kirchberg's QWEP conjecture. On the other side, if this norm is computable, and its computation would even turn out to be practical, this would be very interesting for applications in quantum information theory concerning the maximal quantum violations of Bell inequalities~\cite{Fri2}.

\vspace{0.2cm}

We want to end by recalling Miha\u{\i}lova's construction from \cite{MR0222179} which shows that rather reasonable algorithmic questions, which are known to be solvable for free groups and surface groups, become intractable for products of free groups.
Our initial hope was that we might be able to relate the computability of $\|.\|_u$ on $\Z(F_n\times F_n)$ to the decidability of some of these decision problems for $F_n\times F_n$, which have been studied so extensively. In particular, the hope was to relate it to the \emph{membership problem} for finitely generated subgroups of $F_n \times F_n$. In general, for a subgroup $\Lambda \subset \Gamma$, the membership problem takes some $g\in \Gamma$ as input and asks us to decide whether $g\in \Lambda$ or $g\not\in \Lambda$; typically one assumes here that the word problem for $\Gamma$ is decidable and $\Lambda$ is finitely generated by some finite set which is part of the input.
Miha\u{\i}lova's construction \cite{MR0222179} provides examples of finitely generated subgroups of $F_n \times F_n$, for which this problem is undecidable.
Let $\Gamma = \langle g_1,\dots,g_n |R \rangle$ be a finitely presented group, let $\pi \colon F_n \to \Gamma$ be the natural surjection, and consider the kernel pair
\begin{equation}
\label{Msubgroup}
\Lambda := \{ (g,h) \in F_n \times F_n \mid \pi(g)=\pi(h) \}.
\end{equation}
It is easy to see that the subgroup $\Lambda\subset F_n\times F_n$ is generated by $R \times \{e\}$ and a diagonal copy of $F_n$. Hence, $\Lambda \subset F_n \times F_n$ is finitely generated by some explicit set of generators. Moreover, the word problem for the finitely presented group $\Gamma$ is equivalent to the membership problem for the inclusion $\Lambda \subset F_2 \times F_2$. Indeed, $(v,w) \in \Lambda$ if and only if $\pi(v^{-1}w)=e$ in $\Gamma$. Combining this with the existence of finitely presented groups with undecidable word problem \cite{MR0092784,BS,MR631441}, Miha\u{\i}lova showed:

\begin{proposition}[Miha\u{\i}lova] The membership problem for finitely generated subgroups of $F_n \times F_n$ is not decidable for any $n \geq 2$.
\end{proposition}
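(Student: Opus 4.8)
The plan is to exhibit, for each $n \geq 2$, a single finitely generated subgroup $\Lambda \subseteq F_n \times F_n$ whose membership problem is undecidable, by reducing to it the word problem of a cleverly chosen finitely presented group. The starting point is the existence of a finitely presented group with undecidable word problem; by padding a two-generator example with redundant generators set equal to the identity, we may assume it is presented as $\Gamma = \langle g_1, \dots, g_n \mid R \rangle$ on exactly $n$ generators with $R$ finite. Let $\pi \colon F_n \to \Gamma$ be the canonical surjection and form its kernel pair
$$\Lambda := \{ (g,h) \in F_n \times F_n \mid \pi(g) = \pi(h) \}.$$

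The first step is to verify that $\Lambda$ is finitely generated, with generators computable from the presentation. I claim that $\Lambda$ is generated by the diagonal $\{ (g,g) \mid g \in F_n \}$ together with the finite set $R \times \{e\}$. Indeed, given $(g,h) \in \Lambda$ we have $gh^{-1} \in \ker\pi = \iip{R}$, so $(g,h) = (gh^{-1},e)(h,h)$ with $(h,h)$ diagonal; it remains to generate $(gh^{-1},e)$. Writing $gh^{-1}$ as a product of conjugates $w r w^{-1}$ with $w \in F_n$ and $r \in R \cup R^{-1}$, each factor satisfies $(w r w^{-1}, e) = (w,w)(r,e)(w,w)^{-1}$, a diagonal conjugate of a generator in $R \times \{e\}$. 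Since the diagonal is the image of the finitely generated free group $F_n$, this exhibits $\Lambda$ as finitely generated by the explicit set $\{ (g_i, g_i) \mid 1 \leq i \leq n \} \cup (R \times \{e\})$.

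The second step is the reduction itself. The free group $F_n$, and hence $F_n \times F_n$, has decidable word problem, so the membership problem for $\Lambda$ is well-posed in the sense required. Given any $v \in F_n$, observe that $(v,e) \in \Lambda$ if and only if $\pi(v) = e$ in $\Gamma$. The map $v \mapsto (v,e)$ is manifestly computable, so a decision procedure for membership in $\Lambda$ would yield one for the word problem of $\Gamma$, contradicting our choice of $\Gamma$.

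I expect the finite generation of $\Lambda$ to be the one point demanding care: the kernel $\ker\pi = \iip{R}$ is in general not finitely generated as a subgroup, and it is only its interplay with the diagonal---conjugation by diagonal elements $(w,w)$ trading the infinitely many conjugates $w r w^{-1}$ against the finite set $R$---that rescues finite generation. The remaining bookkeeping, namely arranging an $n$-generator presentation with undecidable word problem for every $n \geq 2$ and checking that the generating set of $\Lambda$ is produced algorithmically from $(\Gamma, A, R)$, is routine.
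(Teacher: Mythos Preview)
Your proof is correct and follows essentially the same construction as the paper: both form Miha\u{\i}lova's kernel-pair subgroup $\Lambda$ of $F_n\times F_n$ from a finitely presented $\Gamma$ with undecidable word problem, observe that $\Lambda$ is generated by $R\times\{e\}$ together with the diagonal copy of $F_n$, and reduce the word problem for $\Gamma$ to membership in $\Lambda$. You simply supply more detail than the paper does on the finite-generation step and on arranging an $n$-generator presentation for each $n\geq 2$.
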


We think that this result supports the point of view that a positive answer to Question \ref{product} is too much to hope for.

\vspace{0.5cm}

\begin{center}
\bf Acknowledgments
\end{center}

\vspace{0.5cm}

The research of A.T.\ was supported by ERC. Research at Perimeter Institute is supported by the Government of Canada through Industry Canada and by the Province of Ontario through the Ministry of Economic Development and Innovation. Part of this research was done when T.F.\ visited University of Leipzig in 2012. We thank the unknown referee for a helpful comment that improved the statement of Corollary \ref{corup}.

\begin{bibdiv}
\begin{biblist}

\Small

\bib{ake}{article}{
    AUTHOR = {Akemann, C. A. and Ostrand, P. A.},
     TITLE = {Computing norms in group {$C\sp*$}-algebras},
   JOURNAL = {Amer. J. Math.},
  FJOURNAL = {American Journal of Mathematics},
    VOLUME = {98},
      YEAR = {1976},
    NUMBER = {4},
     PAGES = {1015--1047},
 }

\bib{Akh}{book}{
    AUTHOR = {Akhiezer, N. I.},
     TITLE = {The classical moment problem and some related questions in
              analysis},
    SERIES = {Translated by N. Kemmer},
 PUBLISHER = {Hafner Publishing Co., New York},
      YEAR = {1965},
     PAGES = {x+253},
   MRCLASS = {44.60},
  MRNUMBER = {0184042 (32 \#1518)},
}

\bib{BS}{article}{
     AUTHOR = {Baumslag, G.},
   author = {Solitar, D.},
      TITLE = {Some two-generator one-relator non-{H}opfian groups},
    JOURNAL = {Bull. Amer. Math. Soc.},
   FJOURNAL = {Bulletin of the American Mathematical Society},
     VOLUME = {68},
       YEAR = {1962},
      PAGES = {199--201},
}

\bib{bekka}{article}{
   author={Bekka, B.},
   title={Operator-algebraic superridigity for ${\rm SL}_n(\Bbb Z)$,
   $n\geq 3$},
   journal={Invent. Math.},
   volume={169},
   date={2007},
   number={2},
   pages={401--425},
}

\bib{bekval}{book}{
   author={Bekka, B.},
   author={de la Harpe, P.},
   author={Valette, A.},
   title={Kazhdan's property (T)},
   series={New Mathematical Monographs},
   volume={11},
   publisher={Cambridge University Press},
   place={Cambridge},
   date={2008},
   pages={xiv+472},
}

\bib{MR0179237}{article}{
   author={Boone, W.W.},
   title={The word problem},
   journal={Ann. of Math. (2)},
   volume={70},
   date={1959},
   pages={207--265},
 }

\bib{MR0260851}{article}{
   author={Borisov, V.V.},
   title={Simple examples of groups with unsolvable word problem},
   language={Russian},
   journal={Mat. Zametki},
   volume={6},
   date={1969},
   pages={521--532},
}

\bib{BH}{misc}{
 author = {Brand{\~a}o, F.},
 author = {Harrow, A.W.},
 title = {Quantum de Finetti Theorems under Local Measurements with Applications},
 note = {{arXiv:1210.6367}},
 year = {2012},
}

\bib{brownozawa}{book}{
   author={Brown, N.P.},
   author={Ozawa, N.},
   title={$C^{\ast}$-algebras and finite-dimensional approximations.},
   series={Graduate Studies in Mathematics},
   volume={88},
   publisher={Amer. Math. Soc.},
   place={Providence, RI},
   date={2008},
}

\bib{choi}{article}{
   author={Choi, M.D.},
   title={The full $C^{\ast}$-algebra of the free group on two generators},
   journal={Pacific J. Math},
   volume={87},
   date={1980},
   number={1},
   pages={41--48},
}

\bib{cimpric}{article}{
    AUTHOR = {Cimpri{\v{c}}, J.},
     TITLE = {A representation theorem for {A}rchimedean quadratic modules
              on {$*$}-rings},
   JOURNAL = {Canad. Math. Bull.},
  FJOURNAL = {Canadian Mathematical Bulletin. Bulletin Canadien de
              Math\'ematiques},
    VOLUME = {52},
      YEAR = {2009},
    NUMBER = {1},
     PAGES = {39--52},
}

\bib{DW}{misc}{
 author = {Doherty, A.},
 author = {Wehner, S.},
 title = {Convergence of {SDP} hierarchies for polynomial optimization on the hypersphere},
 note = {{arXiv:1210.5048}},
 year = {2012},
} 

\bib{Fri1}{misc}{
 author = {Fritz, T.},
 title = {Operator system structures on the unital direct sum of $C^*$-algebras},
 year = {2010},
 note = {{arXiv:1011.1247}. To appear in Rocky Mountain J.~Math.},
}

\bib{Fri2}{article}{
 author = {Fritz, T.},
 title = {Tsirelson's problem and {K}irchberg's conjecture},
 journal = {Rev. Math. Phys.},
 volume = {24},
 issue = {5},
 pages = {1250012},
 year = {2012},
}

\bib{kesten}{article}{
   author={Kesten, H.},
   title={Symmetric random walks on groups},
   journal={Trans. Amer. Math. Soc.},
   volume={92},
   date={1959},
   pages={336--354},
}

\bib{MR631441}{article}{
   author={Kharlampovich, O.G.},
   title={A finitely presented solvable group with unsolvable word problem},
   language={Russian},
   journal={Izv. Akad. Nauk SSSR Ser. Mat.},
   volume={45},
   date={1981},
   number={4},
   pages={852--873, 928},
}

\bib{Kir}{article}{
    AUTHOR = {Kirchberg, E.},
     TITLE = {On nonsemisplit extensions, tensor products and exactness of
              group {$C^*$}-algebras},
   JOURNAL = {Invent. Math.},
  FJOURNAL = {Inventiones Mathematicae},
    VOLUME = {112},
      YEAR = {1993},
    NUMBER = {3},
     PAGES = {449--489},
      ISSN = {0020-9910},
}

\bib{Lasserre}{article}{
    AUTHOR = {Lasserre, J.B.},
     TITLE = {Global optimization with polynomials and the problem of
              moments},
   JOURNAL = {SIAM J. Optim.},
  FJOURNAL = {SIAM Journal on Optimization},
    VOLUME = {11},
      YEAR = {2000/01},
    NUMBER = {3},
     PAGES = {796--817 (electronic)},
      ISSN = {1052-6234},
}

\bib{leh}{article}{
    AUTHOR = {Lehner, F.},
     TITLE = {Computing norms of free operators with matrix coefficients},
   JOURNAL = {Amer. J. Math.},
  FJOURNAL = {American Journal of Mathematics},
    VOLUME = {121},
      YEAR = {1999},
    NUMBER = {3},
     PAGES = {453--486},
}

\bib{lubsh}{article}{
   author={Lubotzky, A.},
   author={Shalom, Y.},
   title={Finite representations in the unitary dual and Ramanujan groups},
   journal={Discrete geometric analysis: proceedings of the first JAMS Symposium on Discrete Geometric Analysis, December 12-20, 2002, Sendai, Japan, Contemporary Mathematics},
   volume={347},
   date={2004},
   pages={pp. 173},
}

\bib{Magnus}{article}{
   author = {Magnus, W.},
   title = {Das Identit{\"a}tsproblem f{\"u}r Gruppen mit einer definierenden Relation},
   journal = {Mathematische Annalen},
   publisher = {Springer Berlin / Heidelberg},
   pages = {295--307},
   volume = {106},
   issue = {1},
   year = {1932}
}

\bib{Malcev}{article}{
    AUTHOR = {Mal'cev, A.},
     TITLE = {On isomorphic matrix representations of infinite groups},
   JOURNAL = {Rec. Math. [Mat. Sbornik] N.S.},
    VOLUME = {8 (50)},
      YEAR = {1940},
     PAGES = {405--422},
}

\bib{mckin}{article}{
   author={McKinsey, J.C.C.},
   title={The decision problem for some classes of sentences without
   quantifiers},
   journal={J. Symbolic Logic},
   volume={8},
   date={1943},
   pages={61--76},
}

\bib{MR0222179}{article}{
   author={Miha{\u\i}lova, K.A.},
   title={The occurrence problem for free products of groups},
   language={Russian},
   journal={Mat. Sb. (N.S.)},
   volume={75 (117)},
   date={1968},
   pages={199--210},
}

\bib{most}{article}{
   author={Mostowski, A.W.},
   title={On the decidability of some problems in special classes of groups},
   journal={Fund. Math.},
   volume={59},
   date={1966},
   pages={123--135},
}

\bib{MR0052436}{article}{
   author={Novikov, P.S.},
   title={On algorithmic unsolvability of the problem of identity},
   language={Russian},
   journal={Doklady Akad. Nauk SSSR (N.S.)},
   volume={85},
   date={1952},
   pages={709--712},
}

\bib{MR0092784}{article}{
   author={Novikov, P.S.},
   title={On the algorithmic insolvability of the word problem in group
   theory},
   conference={
      title={American Mathematical Society Translations, Ser 2, Vol. 9},
   },
   book={
      publisher={American Mathematical Society},
      place={Providence, R. I.},
   },
   date={1958},
   pages={1--122},
}

\bib{NPA}{article}{
  author={M. Navascu{\'e}s},
  author = {S. Pironio},
  author = {A. Ac{\'i}n},
  title={A convergent hierarchy of semidefinite programs characterizing the set of quantum correlations},
  journal={New Journal of Physics},
  volume={10},
  number={7},
  pages={073013},
  url={http://stacks.iop.org/1367-2630/10/i=7/a=073013},
  year={2008},
} 

\bib{NPA2}{article}{
author = {Pironio, S.},
author = {Navascu{\'e}s, M.},
author = {Ac{\'i}n, A.},
title = {Convergent Relaxations of Polynomial Optimization Problems with Noncommuting Variables},
journal = {SIAM Journal on Optimization},
volume = {20},
number = {5},
pages = {2157--2180},
year = {2010},
}

\bib{MR2072092}{article}{
   author={Ozawa, N.},
   title={About the QWEP conjecture},
   journal={Internat. J. Math.},
   volume={15},
   date={2004},
   number={5},
   pages={501--530},
}

\bib{Schm}{incollection}{
     AUTHOR = {Schm{\"u}dgen, K.},
      TITLE = {Noncommutative real algebraic geometry---some basic concepts and first ideas},
  BOOKTITLE = {Emerging applications of algebraic geometry},
     SERIES = {IMA Vol. Math. Appl.},
     VOLUME = {149},
      PAGES = {325--350},
  PUBLISHER = {Springer},
    ADDRESS = {New York},
       YEAR = {2009},
}                                                                                                                                             

\bib{Tarski}{book}{
    AUTHOR = {Tarski, A.},
     TITLE = {A decision method for elementary algebra and geometry},
      NOTE = {2nd ed},
 PUBLISHER = {University of California Press},
   ADDRESS = {Berkeley and Los Angeles, Calif.},
      YEAR = {1951},
     PAGES = {iii+63},
}

\bib{turing}{article}{
   author={Turing, A.M.},
   title={On Computable Numbers, with an Application to the
   Entscheidungsproblem},
   journal={Proc. London Math. Soc.},
   volume={S2-42},
   number={1},
   pages={230},
 }

\bib{wo}{book}{
     TITLE = {Handbook of semidefinite programming},
    SERIES = {International Series in Operations Research \& Management
              Science, 27},
    EDITOR = {H. Wolkowicz and R. Saigal and L. Vandenberghe},
      NOTE = {Theory, algorithms, and applications},
 PUBLISHER = {Kluwer Academic Publishers},
   ADDRESS = {Boston, MA},
      YEAR = {2000},
     PAGES = {xxviii+654},
}

\end{biblist}
\end{bibdiv} 

\end{document}